\numberwithin{equation}{section}
\theoremstyle{plain}
\newtheorem{theorem}{Theorem}[section]
\newtheorem{lemma}[theorem]{Lemma}
\numberwithin{equation}{section}
\newtheorem{remark}{Remark}
\newcommand{\R}{\mathbb{R} }
\newcommand{\E}{\mathbb{E} }
\newcommand{\D}{\mathcal{D}}
\newcommand{\F}{\mathcal{F}}
\newcommand{\Z}{\widetilde{Z} }
\begin{document}
	\title[Branching Brownian motion with absorption]{An ergodic theorem for the maximum of branching Brownian motion with absorption}
	\author[F. Yang]{Fan Yang}
	\address{Fan Yang\\ School of Mathematical Sciences \\  Beijing University of Posts and Telecommunications\\ Beijing 100876\\  China}
	\email{fan-yang@bupt.edu.cn}
	\thanks{The research of this project is supported by the National Key R\&D Program of China (No. 2020YFA0712900). The research of F. Yang is supported by China Postdoctoral Science Foundation (No. 2023TQ0033) and Postdoctoral Fellowship Program of CPSF (No. GZB20230068).}

	\begin{abstract} 
		In this paper, we study branching Brownian motion with absorption, in which particles undergo Brownian motions and are killed upon hitting the absorption barrier. We prove that the empirical distribution function of the maximum of this process converges almost surely to a randomly shifted Gumbel distribution.
	\end{abstract}
	\subjclass[2020]{Primary: 60J80; Secondary: 60G70}
	
	\keywords{branching Brownian motion with absorption; ergodic theorem; extreme value theory}
	\maketitle

	\section{Introduction}\label{Sec:intro}
	A classical branching Brownian motion (BBM) in $\R$ can be constructed as follows. Initially there is a single particle at the origin of the real line and this particle moves as a 1-dimensional standard Brownian motion denoted by $B = \{B(t), t\geq 0 \}$. 
	After an independent exponential time with parameter $1$, the initial particle dies and gives birth to $L$ offspring, where $L$ is a positive integer-valued random variable with distribution $\{p_k: k\geq 1 \}$. Here we assume that the expected number of offspring is $2$ (i.e., $\sum_{k=1}^{\infty} kp_k = 2$) and the variance of the offspring distribution is finite (i.e., $\sum_{k=1}^{\infty} k(k-1)p_k < \infty$). Each offspring starts from its creation position and evolves independently, according to the same law as its parent. We denote the collection of particles alive at time $t$ as $N_t$. For any $u\in N_t$ and $s\le t$, let $X_u(s)$ be the position at time $s$ of particle $u$ or its ancestor alive at that time. 
	
	McKean \cite{McKean75} established a connection between BBM and the Fisher-Kolmogorov-Petrovskii-Piskounov (F-KPP) equation
	\begin{equation}\label{KPPeq1}
		\frac{\partial u}{\partial t} = \frac{1}{2} \frac{\partial^2 u}{\partial x^2} + \sum_{k=1}^{\infty} p_k u^k - u.
	\end{equation}
	The F-KPP equation has received entensive attention from both analytic techniques (see, for instance, Kolmogorov et al. \cite{Kolmogorov37} and Fisher \cite{Fisher37}) and probabilistic methods
	(see, for example, McKean \cite{McKean75}, Bramson \cite{Bramson78,Bramson83}, Harris \cite{Harris99} and Kyprianou \cite{Kyprianou04}). 
	
	Let's recall some classical results on BBM and F-KPP equation. Define
	\begin{equation}
		\mathbf{M}_t := \max\{X_u(t): u\in N_t \}.
	\end{equation}
	Bramson \cite{Bramson78} established that
	\begin{equation}
		\lim_{t\to\infty}\mathbb{P}(\mathbf{M}_t - m_t \le z)=\lim_{t\to \infty} u(t,m_t+z)=w(z),\quad z\in \R,
	\end{equation}
	where $m_t:= \sqrt{2} t-\frac{3}{2\sqrt{2}}\log t$ and $w$ solves the ordinary differential equation $\frac{1}{2}w''+\sqrt{2}w'+\sum_{k=1}^{\infty} p_k w^k - w=0$.
	Such a solution $w$ is known as the traveling wave solution.
	Lalley and Sellke \cite{Lalley87} provided the following representation of $w$ for dyadic BBM
	\begin{equation}\label{travelling}
		w(z):=\E\left[e^{-C_* e^{-\sqrt{2} z}Z_{\infty}}\right],
	\end{equation}
	where $C_*$ is a positive constant and $Z_{\infty}$ is the limit of the derivative martingale of BBM. 
	Specifically, define 
	\begin{equation}\label{def_derivative}
		Z_t = \sum_{u\in N_t} (\sqrt{2}t - X_u(t)) e^{\sqrt{2}(X_u(t)-\sqrt{2}t) },
	\end{equation}
	then $Z_t$ serves as the derivative martingale of the BBM. We denote $Z_{\infty}$ as the limit of $Z_t$ $\mathbb{P}_x$-almost surely, as established by Lalley and Sellke \cite{Lalley87} or Kyprianou \cite{Kyprianou04}. In the same paper \cite{Lalley87}, they conjectured that the empirical (time-averaged) distribution of maximal displacement converges almost surely, that is,
	\begin{equation}\label{ergodic_BBM}
		\lim_{T\to\infty} \frac{1}{T} \int_0^T \mathbf{1}_{\{\mathbf{M}_s - m_s \leq x\}} \mathrm{d} s = \exp\left\{-C_* Z_{\infty} e^{-\sqrt{2}x} \right\}, \quad \mbox{a.s.}
	\end{equation}
	This conjecture was later confirmed by Arguin, Bovier and Kistler \cite{ABK13b}.
	
	In this paper, we consider the similar problems of BBM with absorption, where the particle is killed when it hits the absorbing barrier. The process can be defined as follows. Initially there is a single particle at $x>0$ and this particle evolves as the classical BBM with branching rate $1$. We also assume that the number of offspring $L$ has distribution $\{p_k,k\geq 1\}$ with $\E L = 2$ and $\E L^2 < \infty$. In addition, we add an absorbing barrier at the line $\{(y,t):y= \rho t \}$ for some $\rho\in\R$, i.e. particles hitting the barrier are instantly killed without producing offspring  (see Figure \ref{figure1}). 
	\begin{figure*}
		\centering
		\subfigure{
			\begin{minipage}[t]{0.45\textwidth}
				\centering
				\includegraphics[width=8cm]{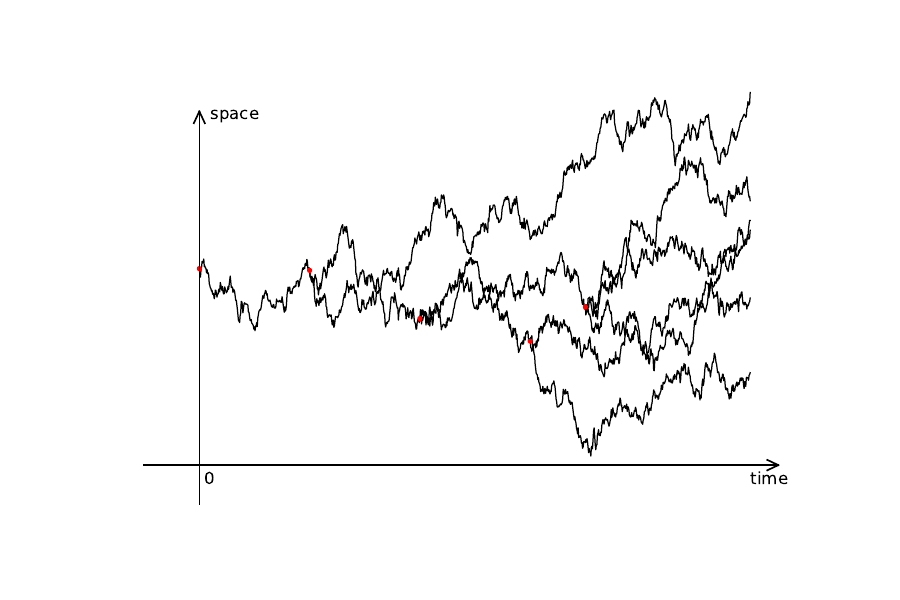}
			\end{minipage}
		}
		\hfill
		\centering
		\subfigure{
			\begin{minipage}[t]{0.45\textwidth}
				\centering
				\includegraphics[width=8cm]{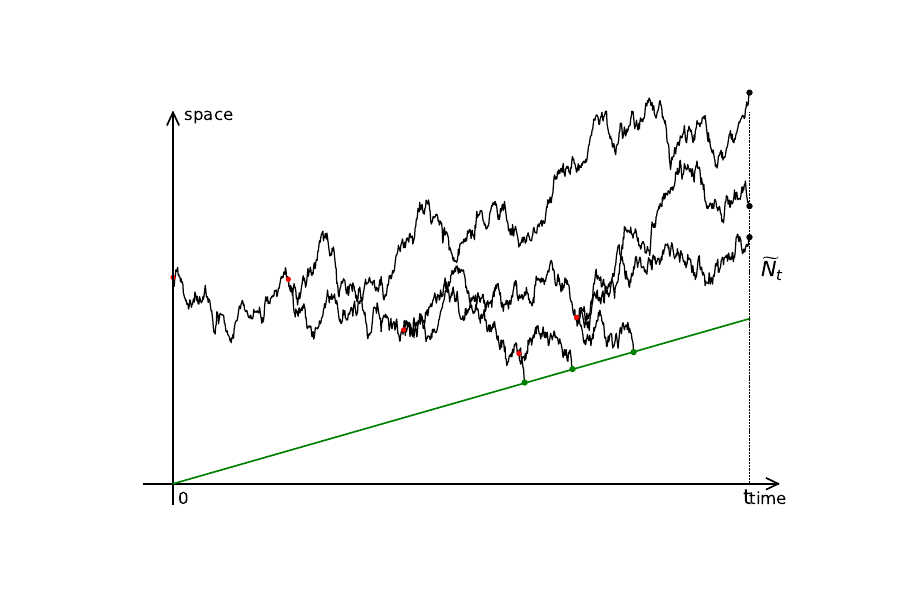}
			\end{minipage}
		}
		\centering
		\caption{BBM and BBM with absorption}
		\label{figure1}
	\end{figure*}
	
	We use $\widetilde{N}_t$ to denote the set of the particles of the BBM with absorption that are still alive at time $t$. For any particle $u\in \widetilde{N}_t$ and any time $s\leq t$, we continue to use $X_u(s)$ to represent the position of either particle $u$ itself or its ancestor at time $s$.  
	The extinction time of the BBM with absorption is defined as
	$$\zeta:=\inf\{t>0: \widetilde{N}_t = \emptyset \}.$$ 
	Additionally, we define $\widetilde{\mathbf{M}}_t$ as the maximum position among all particles $u\in\widetilde{N}_t$. The law of the BBM with absorption, starting from a single particle at position $x$ is denoted by $\mathbb{P}_x$, and its expectation is denoted by $\E_x$. 
	
	The asymptotic behavior of BBM with absorption has been extensively studied in the literature.
	Kesten \cite{Kesten78} demonstrated that the process dies out almost surely when $\rho \ge \sqrt{2}$ while there is a positive probability of survival, that is, $\mathbb{P}_x(\zeta=\infty)>0$, when $\rho < \sqrt{2}$. Therefore, $\rho = \sqrt{2}$ is the critical drift separating the supercritical case $\rho < \sqrt{2}$ and the subcritical $\rho > \sqrt{2}$.
	In the subcritical case,  Harris and Harris \cite{Harris07} provided the large time asymptotic behavior for the survival probability.
	In the critical case, Kesten \cite{Kesten78} obtained upper and lower bounds on the survival probability, which were subsequently improved by Berestycki et al. \cite{Berestycki14}. Maillard and Schweinsberg \cite{Maillard22} have further enhanced these results and investigated the behavior conditioned to survive. For the BBM with absorption in the near-critical case, Berestycki et al. \cite{Berestycki11} and Liu \cite{Liu21} are good references.
	In the supercritical case, Harris et al. \cite{Harris06} studied properties of the right-most particle and the one-sided F-KPP traveling wave solution using probabilistic methods in the case of binary branching. Specifically, they proved that
	\begin{equation}\label{growth rate}
		\lim_{t\to \infty} \frac{\widetilde{\mathbf{M}}_t}{t}=\sqrt{2}\quad on \ \{\zeta=\infty\}, \ \mathbb{P}_x\mbox{-a.s.}
	\end{equation}
	and $g(x) := \mathbb{P}_x(\zeta<\infty)$ is the unique solution to the one-side F-KPP traveling wave solution
	\begin{equation}
		\left \{
		\begin{aligned}
			& \frac{1}{2}g''-\rho g'+ g^2 -g = 0,\quad x>0, \\
			& g(0+)=1,\quad g(\infty)=0.\\
		\end{aligned}\right.
	\end{equation}
	Louidor and Saglietti \cite{Louidor20} showed that the number of particles inside any fixed set, normalized by the mean population size, converges to an explicit limit almost surely.
	In this paper, we focus on the supercritical case. \textit{In the remainder of this paper, we always assume $\rho<\sqrt{2}$.}
	
	In \cite{YZ23}, we studied the maximal displacement and the extremal proccess of BBM with absorption. More precisely, we established the following result:
	\begin{equation}
		\lim_{t\to \infty} \mathbb{P}_x(\widetilde{\mathbf{M}}_t - m_t\leq z) = \E_x(e^{-C_*\widetilde{Z}_{\infty} e^{-\sqrt{2}z} }),
	\end{equation}
	where $\Z_{\infty}$ is defined as the limit of $\Z_t$ and 
	\begin{equation}\label{def_Z}
		\Z_t := \sum_{u\in \widetilde{N}_t} (\sqrt{2}t-X_u(t)) e^{\sqrt{2}(X_u(t)-\sqrt{2}t) }.
	\end{equation}
	It's important to note that $\{\Z_t, t\geq 0, \mathbb{P}_x \}$ is not a martingale. However, according to \cite[Theorem 2.1]{YZ23}, the limit $\Z_{\infty} := \lim_{t\rightarrow\infty} \Z_t$ exists $\mathbb{P}_x$-almost surely for any $x>0$ and $\rho < \sqrt{2}$. 
	Similar to \eqref{ergodic_BBM}, our paper focuses on the empirical distribution function of the maximum of branching Brownian motion with absorption. We prove that the limit of this empirical distribution converges almost surely to a Gumbel distribution with a random shift. Here's the statement of the main result.
	\begin{theorem}[Ergodic Theorem]\label{theorem_ergodic}
		For any $x>0$, $\rho<\sqrt{2}$ and $z\in\R$, we have
		\begin{equation}\label{equation_ergodic}
			\lim_{T\uparrow\infty} \frac{1}{T} \int_0^T \mathbf{1}_{\left\{\widetilde{\mathbf{M}}_t -m_t \leq z\right\}} \mathrm{d}t = \exp\left\{-C_* \Z_{\infty} e^{-\sqrt{2} z} \right\}, \quad \mathbb{P}_x\mbox{-a.s.},
		\end{equation}
		where the positive constant $C_*$ is given by \eqref{travelling}. 
	\end{theorem}

	\section{Proof of Theorem \ref{theorem_ergodic}}\label{Sec:proof_thm}
	We can put a BBM and a BBM with absorption in the same probability space. More precisely, we can construct a BBM as described in Section \ref{Sec:intro}. By considering only the particles that are never killed by the line $\{(y,t): y=\rho t \}$, we obtain a BBM with absorption. Therefore, 
	\begin{equation}\label{def_widetilde_N_t}
		\widetilde{N}_t = \{u\in N_t: \forall s\leq t, X_u(s) > \rho s \}.
	\end{equation}
	\begin{figure*}
		\centering
		\includegraphics[width=10cm]{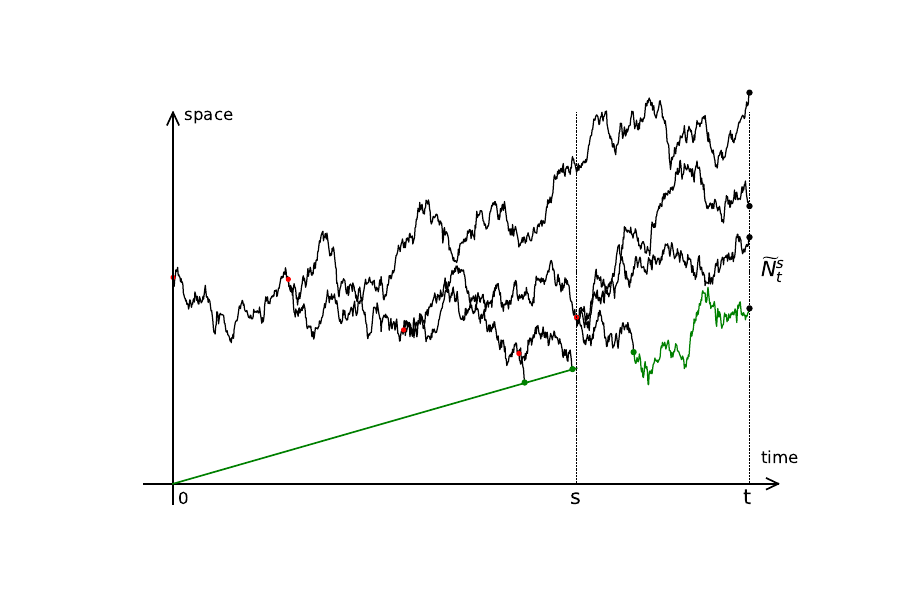}
		\caption{The truncated absorption barrier}
		\label{figure2}
	\end{figure*}	
	Furthermore, for $s\leq t$, we define
	\begin{equation}\label{def_widetilde_N_t^s}
		\widetilde{N}_t^s := \{u\in N_t: \exists v\in \widetilde{N}_s, \mbox{ s.t. } u>v \},
	\end{equation}
	where $u>v$ indicates that $u$ is a descendant of $v$ (see Figure \ref{figure2}). Notice that the set $\widetilde{N}^s_t$ contains all the particles alive at time $t$ that do not hit the line segment $\{(y,r):y =\rho r, 0\leq r\leq s \}$. 
	For convenience, define
	\begin{equation}
		M_t := \max\{X_u(t): u\in N_t \} - m_t \quad \mbox{and} \quad \widetilde{M}_t := \max\{X_u(t): u\in \widetilde{N}_t \} - m_t.
	\end{equation} 
	Then $M_t = \mathbf{M}_t - m_t$ and $\widetilde{M}_t = \widetilde{\mathbf{M}}_t - m_t$. Similarly, define
	\begin{equation}\label{def_widetilde_M_t^s}
		\widetilde{M}_t^s := \max\{X_u(t): u\in \widetilde{N}^s_t\} - m_t.
	\end{equation}
	
	In the proof of Theorem \ref{theorem_ergodic}, we need the following two lemmas, whose proofs are postponed to Sections \ref{Sec:truncated} and \ref{Sec:M_RT_t}. First, as in \cite{ABK13b}, we consider the time $R_T>0$. However, in this paper, we require that there exists some $l>0$ such that $R_T/T^{l}\uparrow \infty$ and $R_T/\sqrt{T}\downarrow 0$, as $T\uparrow\infty$. We truncate the absorption barrier at time $R_T$ and will show that the empirical distribution of $\widetilde{M}_t^{R_T}$ converges almost surely.
	
	\begin{lemma}\label{lemma_Mt_truncated}
		Let $R_T/T^l \uparrow\infty$ as $T\uparrow\infty$ for some $l>0$ but with $R_T = o(\sqrt{T})$. Then for any $x>0$ and $z\in\R$,
		\begin{equation}\label{ergodic_truncated}
			\lim_{T\uparrow\infty} \frac{1}{T} \int_0^T \mathbf{1}_{\left\{ \widetilde{M}_t^{R_T}\leq z\right\}} \mathrm{d}t = \exp\left\{ -C_* \Z_{\infty} e^{-\sqrt{2}z} \right\}, \quad \mathbb{P}_x\mbox{-a.s.}
		\end{equation}
	\end{lemma}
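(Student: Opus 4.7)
The plan is to condition on the $\sigma$-algebra $\F_{R_T}$ at time $R_T$, after which no more absorption occurs, so the process $\{X_u(t): u\in\widetilde{N}_t^{R_T}\}$ evolves as a superposition of independent classical BBMs, one rooted at each $v\in\widetilde{N}_{R_T}$. Writing $M_s^{(v)}$ for the (translated-to-origin) maximum at time $s$ of the subtree of $v$, one has
$$\widetilde{M}_t^{R_T} = \max_{v\in\widetilde{N}_{R_T}}\bigl[X_v(R_T) + M_{t-R_T}^{(v)}\bigr],\qquad t\geq R_T.$$
Combining Bramson's theorem and the Lalley--Sellke representation with the expansion $m_t - m_{t-R_T} = \sqrt{2}R_T + o(1)$, which is valid uniformly on $[R_T+1,T]$ because $R_T=o(\sqrt{T})$, one obtains the one-time conditional limit
$$\mathbb{P}\bigl(\widetilde{M}_t^{R_T} - m_t \leq z \,\bigm|\, \F_{R_T}\bigr) \longrightarrow \E\bigl[\exp\bigl(-C_\ast e^{-\sqrt{2}z}\Z_\infty^{R_T}\bigr) \,\bigm|\, \F_{R_T}\bigr],\qquad t\to\infty,$$
where $\Z_\infty^{R_T} := \sum_{v\in\widetilde{N}_{R_T}} e^{\sqrt{2}(X_v(R_T)-\sqrt{2}R_T)} Z_\infty^{(v)}$ and $Z_\infty^{(v)}$ denotes the derivative martingale limit of the subtree of $v$. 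A direct decomposition of $\Z_t^{R_T}$ at time $R_T$, combined with the fact that the additive martingale vanishes in the limit and that $\Z_t\to\Z_\infty$ a.s.\ by \cite{YZ23}, yields $\Z_\infty^{R_T}\to\Z_\infty$ a.s.\ as $T\to\infty$.

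With this one-time asymptotic, I would promote the result to an a.s.\ time-averaged limit through an $L^2$ argument modeled on \cite{ABK13b}. The conditional expected time average $\E\bigl[\frac{1}{T}\int_0^T \mathbf{1}_{\{\widetilde{M}_t^{R_T}-m_t\leq z\}}\,dt\,\bigm|\,\F_{R_T}\bigr]$ converges a.s.\ to $\exp(-C_\ast e^{-\sqrt{2}z}\Z_\infty)$ by the one-time estimate and bounded convergence (after discarding the initial window $t\in[0,R_T+1]$, whose contribution has length $O(R_T)=o(T)$). The harder step is the conditional variance bound, which reduces to a decorrelation statement: for $R_T\ll t<t'$ with $t'-t$ large, one must show that
$$\mathbb{P}\bigl(\widetilde{M}_t^{R_T}-m_t\leq z,\,\widetilde{M}_{t'}^{R_T}-m_{t'}\leq z \,\bigm|\,\F_{R_T}\bigr) - \mathbb{P}\bigl(\widetilde{M}_t^{R_T}-m_t\leq z\,\bigm|\,\F_{R_T}\bigr)\,\mathbb{P}\bigl(\widetilde{M}_{t'}^{R_T}-m_{t'}\leq z\,\bigm|\,\F_{R_T}\bigr) \to 0,$$
essentially because, after a large intermediate time, the dominant lineages producing the two maxima typically split long before both $t$ and $t'$ and are hence approximately independent. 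Together, Chebyshev's inequality and Borel--Cantelli along a discrete subsequence $T_n$ then give a.s.\ convergence at $T_n$; monotonicity of $t\mapsto\int_0^t\mathbf{1}_{\{\cdot\}}\,ds$ and continuity of $t\mapsto m_t$ extend this to all $T$.

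The main obstacle will be the second-moment decorrelation estimate. In the unrestricted ABK setting the analogous calculation rests on a careful branching-time decomposition of the two-time joint event together with sharp Gaussian tail bounds for $\mathbf{M}_t-m_t$; here the same scheme should work, but conditionally on $\F_{R_T}$ and with constants that must stay uniformly controlled as $R_T\to\infty$. The two scaling conditions on $R_T$ play complementary roles: $R_T=o(\sqrt{T})$ ensures that the Bramson logarithmic correction in $m_t-m_{t-R_T}$ contributes only an $o(1)$ shift, while the polynomial lower bound $R_T\geq T^l$ with $l>0$ furnishes the quantitative rate in the convergence $\Z_\infty^{R_T}\to\Z_\infty$ needed for Borel--Cantelli summability along the discrete sequence. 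Calibrating these two rates against the decorrelation estimate is the delicate part of the argument.
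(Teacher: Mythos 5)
Your overall architecture coincides with the paper's: condition on $\F_{R_T}$, prove a one-time conditional limit (the paper's Lemma \ref{lemma_condi}), control the time-averaged fluctuations around that conditional mean via a second-moment/decorrelation argument imported from \cite{ABK13b} (the paper's Lemma \ref{lemma_rest}), and close with Borel--Cantelli plus monotonicity. The roles you assign to the two scaling hypotheses on $R_T$ are also the correct ones.

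There is, however, a genuine gap in your one-time step. You obtain, for fixed $T$ and $t\to\infty$, the limit $\mathbb{P}(\widetilde{M}_t^{R_T}-m_t\leq z\,|\,\F_{R_T})\to\E\bigl[\exp(-C_*e^{-\sqrt{2}z}\Z_\infty^{R_T})\,\big|\,\F_{R_T}\bigr]$, and then argue that $\Z_\infty^{R_T}\to\Z_\infty$ a.s.\ as $T\to\infty$. But $\Z_\infty^{R_T}$ is not $\F_{R_T}$-measurable, and almost-sure convergence of $\Z_\infty^{R_T}$ does not imply almost-sure convergence of the $\F_{R_T}$-conditional expectation to $\exp(-C_*\Z_\infty e^{-\sqrt{2}z})$; combining bounded convergence with L\'evy's upward theorem would only give $L^1$ convergence, which is insufficient for the ergodic theorem. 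What is actually needed (and is what the paper does) is to expand each factor $1-\mathbb{P}_0(M_{t-R_T}>f(D,R_T))$ using the uniform tail asymptotic $\mathbb{P}_0(M_s>y)=C_*(1+o(1))\,y\,e^{-\sqrt{2}y}$ from \cite[Lemma 4]{ABK13b}, take logarithms, and then invoke the almost-sure convergences $\sum_{u\in\widetilde{N}_{R_T}}(\sqrt{2}R_T-X_u(R_T))e^{-\sqrt{2}(\sqrt{2}R_T-X_u(R_T))}\to\Z_\infty$ and $\sum_{u\in\widetilde{N}_{R_T}}e^{-\sqrt{2}(\sqrt{2}R_T-X_u(R_T))}\to 0$ (from \cite{Kyprianou04} and \cite[Theorem 2.1]{YZ23}). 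This route yields the a.s.\ limit directly and avoids the conditional-expectation detour.

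Two smaller points. The claimed uniformity of $m_t-m_{t-R_T}=\sqrt{2}R_T+o(1)$ over $t\in[R_T+1,T]$ is false (take $t\asymp R_T$, giving an $O(1)$ logarithmic term); it holds only over $[\epsilon T,T]$, which is precisely why the paper restricts to $\int_{\epsilon T}^T$ and sends $\epsilon\downarrow 0$ only at the very end, rather than merely discarding an initial window of length $O(R_T)$. And the decorrelation step you flag as the hard part is executed in the paper not by a bare branching-time decomposition but via the localization-in-a-tube machinery of \cite{ABK11,ABK13b}, transferred to the conditional setting by observing that the tube stays strictly above the absorbing line $\rho s$ for $s>R_T$; your sketch is compatible with this but leaves the essential technical content to be supplied.
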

	
	\begin{remark}
	    In \cite{YZ23}, the absorption barrier was truncated at time $s$ and the set $\widetilde{N}_t^s$ was used to approximate $\widetilde{N}_t$. Following the similar approach, we can show that
	    \begin{equation}
	    	\lim_{T\uparrow\infty} \frac{1}{T} \int_0^T \mathbf{1}_{\left\{ \widetilde{M}_t^{s}\leq z\right\}} \mathrm{d}t = \exp\left\{ -C_* \Z_{\infty}^s e^{-\sqrt{2}z} \right\}, \quad \mathbb{P}_x\mbox{-a.s.},
	    \end{equation} 
	    where $\Z_{\infty}^s = \lim\limits_{t\uparrow\infty} \sum_{u\in \widetilde{N}_t^s} (\sqrt{2}t-X_u(t)) e^{\sqrt{2}(X_u(t)-\sqrt{2}t) }$ and $\lim\limits_{s\uparrow\infty} \Z_{\infty}^s = \Z_{\infty}$. Consequently, 
	    \begin{equation}
	    	\lim_{s\uparrow\infty}\lim_{T\uparrow\infty} \frac{1}{T} \int_0^T \mathbf{1}_{\left\{ \widetilde{M}_t^{s}\leq z\right\}} \mathrm{d}t = \exp\left\{ -C_* \Z_{\infty} e^{-\sqrt{2}z} \right\}, \quad \mathbb{P}_x\mbox{-a.s.}
	    \end{equation}
	    However, establishing Theorem \ref{theorem_ergodic} by interchanging the order of limits presents difficulties. To circumvent this, we replace the fixed truncation time $s$ with a $T$-dependent quantity $R_T$. Since $\widetilde{M}_t \leq \widetilde{M}^{R_T}_t$, it follows that
	    \begin{equation}
	    	\liminf_{T\uparrow\infty} \frac{1}{T} \int_0^T \mathbf{1}_{\left\{ \widetilde{M}_t\leq z\right\}} \mathrm{d}t \geq \exp\left\{ -C_* \Z_{\infty} e^{-\sqrt{2}z} \right\}, \quad \mathbb{P}_x\mbox{-a.s.}
	    \end{equation}
	    The lemma above thus establishes one direction of equation \eqref{equation_ergodic}.
	\end{remark}
		
	Next, we consider the difference between $\widetilde{M}_t$ and $\widetilde{M}_t^{R_T}$. For $s<t$, define $\widetilde{N}_t^{[s,t]} = \widetilde{N}_t^s - \widetilde{N}_t$, which represents the set of particles at time $t$ whose ancestors are not absorbed before time $s$ but hit the absorption barrier between time $s$ and $t$. Define
	\begin{equation}\label{def_M_st}
		\widetilde{M}_t^{[s,t]} := \max\left\{ X_u(t): u\in \widetilde{N}_t^{[s,t]} \right\} - m_t.
	\end{equation}
	The following lemma provides the convergence of the empirical distribution of $\widetilde{M}_t^{[s,t]}$.
	\begin{lemma}\label{lemma_M_RT_t}
		For $\epsilon>0$, and $R_T$ as in Lemma \ref{lemma_Mt_truncated},
		\begin{equation}\label{ergodic_M_RT_t}
			\limsup_{T\uparrow\infty} \frac{1}{T} \int_{\epsilon T}^T \mathbf{1}_{\left\{ \widetilde{M}_t^{[R_T,t]} > z\right\}} \mathrm{d}t = 0, \quad \mathbb{P}_x\mbox{-a.s.}
		\end{equation}
	\end{lemma}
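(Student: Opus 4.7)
The plan is to reduce the almost-sure statement to a first-moment estimate and then upgrade to a.s.\ convergence by Borel--Cantelli along $T_n=2^n$, with a monotonicity argument bridging the intermediate values of $T$. Let
\begin{equation*}
A_t:=\#\{u\in\widetilde{N}_t^{[R_T,t]}:X_u(t)>m_t+z\}.
\end{equation*}
If we show that $\mathbb{E}_x[A_t]$ is super-polynomially small in $T$, uniformly for $t\in[\epsilon T,T]$, then Markov's inequality and Fubini's theorem give
\begin{equation*}
\mathbb{E}_x\Bigl[\tfrac{1}{T}\int_{\epsilon T}^T \mathbf{1}_{\{\widetilde{M}_t^{[R_T,t]}>z\}}\,\mathrm{d}t\Bigr]\le \sup_{t\in[\epsilon T,T]}\mathbb{E}_x[A_t],
\end{equation*}
so the $L^1$-statement is immediate and summability of the right-hand side along $T_n$ will yield the a.s.\ claim.

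The first-moment bound is the crux. Via the many-to-one lemma,
\begin{equation*}
\mathbb{E}_x[A_t]=e^t\,\mathbb{P}_x\bigl(B_s>\rho s\ \forall s\le R_T,\ \exists\,\tau\in(R_T,t]\colon B_\tau\le\rho\tau,\ B_t>m_t+z\bigr),
\end{equation*}
where $B$ is a Brownian motion from $x$. I would apply a Girsanov change of measure, $Y_s:=B_s-\rho s$, making $Y$ a driftless BM from $x$ at the cost of a factor $e^{\rho x-\rho^2 t/2-\rho Y_t}$. On the reduced event one has $Y_s>0$ on $[0,R_T]$, $Y$ hits $0$ at some $\tau\in(R_T,t]$, and $Y_t>L:=m_t+z-\rho t$. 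Conditioning on $Y_{R_T}=y$ via the Brownian-excursion density bound $q_{R_T}^+(x,y)\le Cxy/R_T^{3/2}$ and applying the reflection principle after $R_T$ to get $\mathbb{P}_y(\exists\tau\colon Y_\tau\le 0,\,Y_{t-R_T}\in\mathrm{d}w)=(2\pi(t-R_T))^{-1/2}e^{-(w+y)^2/(2(t-R_T))}\,\mathrm{d}w$ for $w>0$, then integrating the weight $e^{-\rho w}$ against the Gaussian and then in $y$, the $t$-linear contributions from $e^t$, $-\rho^2 t/2$, $-\rho L$, and $-L^2/(2(t-R_T))$ cancel exactly (using the precise value $m_t=\sqrt{2}t-\tfrac{3}{2\sqrt{2}}\log t$), leaving
\begin{equation*}
\mathbb{E}_x[A_t]\le C(x,z,\rho)\,\frac{(t-R_T)\,t^{3/2}}{R_T^{3/2}}\,\exp\!\Bigl(-\tfrac{1}{2}(\sqrt{2}-\rho)^2 R_T\Bigr).
\end{equation*}
Because $R_T\ge T^l$, this is $o(T^{-k})$ for every $k$, uniformly in $t\in[\epsilon T,T]$.

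To pass from $T_n=2^n$ to continuous $T$, I would use that $\widetilde{N}_t^{[s,t]}$ is monotone non-increasing in $s$: increasing $s$ both strengthens ``ancestor avoids the barrier up to time $s$'' and shrinks the crossing window $(s,t]$. Taking $R_T$ non-decreasing in $T$ (which is allowed by the hypotheses), for $T\in[T_n,T_{n+1}]$ we have $\mathbf{1}_{\{\widetilde{M}_t^{[R_T,t]}>z\}}\le\mathbf{1}_{\{\widetilde{M}_t^{[R_{T_n},t]}>z\}}$, so
$\sup_{T\in[T_n,T_{n+1}]}\tfrac{1}{T}\int_{\epsilon T}^T(\cdot)\,\mathrm{d}t$ is at most $2\cdot\tfrac{1}{T_{n+1}}\int_{\epsilon T_n}^{T_{n+1}}\mathbf{1}_{\{\widetilde{M}_t^{[R_{T_n},t]}>z\}}\mathrm{d}t$, to which the same Borel--Cantelli argument applies. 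The main obstacle is the exponent bookkeeping in the first-moment bound: one must ensure that the large factor $e^t$ from many-to-one is precisely balanced so that the polynomial factor $R_T^{-3/2}$ from the excursion density and the Gaussian tail at time $t-R_T$ leave behind the decay $\exp(-(\sqrt{2}-\rho)^2 R_T/2)$, which is what drives the whole argument.
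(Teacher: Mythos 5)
Your proposal is correct in its essentials but takes a more self-contained route than the paper. The paper avoids the many-to-one/Girsanov computation entirely: it splits $\widetilde{N}_t^{[R_T,t]}=\widetilde{N}_t^{[R_T,pt]}\cup\widetilde{N}_t^{[pt,t]}$ for a fixed $p\in(0,1)$ and imports two ready-made first-moment bounds from \cite[Lemma 4.3]{YZ23}---one giving decay $\exp\{-(\rho/\sqrt{2}-1)^2 pt\}$ for particles absorbed in $[pt,t]$, the other giving decay $R_T^{-1/2}\exp\{-(\sqrt{2}-\rho)^2R_T/2\}$ for those absorbed in $[R_T,pt]$---then applies Markov on the time-averaged integral and Borel--Cantelli over $T\in\mathbb{N}$. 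Your approach instead treats the whole window $[R_T,t]$ at once and derives the bound from scratch; the exponent bookkeeping you describe is correct (the $t$-linear terms do cancel, the $\log t$ terms assemble to $t^{3/2}$, and the surviving $R_T$-term is $-(\sqrt{2}-\rho)^2R_T/2$), though your stated polynomial prefactor $(t-R_T)t^{3/2}/R_T^{3/2}$ is more generous than necessary---a careful count gives roughly $t^{3/2}/(R_T^{3/2}\sqrt{t-R_T})$, but either way the exponential in $R_T\ge T^l$ dominates. One genuine improvement in your write-up over the paper's: the paper runs Borel--Cantelli over integer $T$ and leaves the passage to continuous $T\uparrow\infty$ implicit, which is not immediate here because the integrand $\mathbf{1}_{\{\widetilde{M}_t^{[R_T,t]}>z\}}$ depends on $T$ through $R_T$. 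Your observation that $\widetilde{N}_t^{[s,t]}$ is non-increasing in $s$ (so that choosing $R_T$ non-decreasing lets one dominate the supremum over $T\in[T_n,T_{n+1}]$ by the value at $T_n$) is exactly the bridging argument needed to make that step rigorous, and the paper would benefit from including it. The only caveat in your version is that you should state explicitly that one may choose $R_T$ non-decreasing (e.g.\ $R_T=T^{1/4}$), which is compatible with the hypotheses of Lemma~\ref{lemma_Mt_truncated}.
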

	
	\begin{remark}
	    This lemma shows that the particles whose ancestors were absorbed during the interval $[R_T,t]$ do not contribute to the maximum. Moreover, it serves as the key to establishing the converse direction of equation \eqref{equation_ergodic}.
	\end{remark}
	
	\begin{proof}[Proof of Theorem \ref{theorem_ergodic}]
		Note that $\widetilde{N}_t \subset \widetilde{N}_t^{R_T}$. Hence we have $\widetilde{M}_t \leq \widetilde{M}^{R_T}_t$ and then $\mathbf{1}_{\left\{ \widetilde{M}_t \leq z \right\}} \geq \mathbf{1}_{\left\{ \widetilde{M}^{R_T}_t \leq z \right\}}$. By Lemma \ref{lemma_Mt_truncated}, it holds that
		\begin{equation}
			\liminf_{T\uparrow\infty} \frac{1}{T} \int_0^T \mathbf{1}_{\left\{ \widetilde{M}_t\leq z\right\}} \mathrm{d}t \geq \exp\left\{ -C_* \Z_{\infty} e^{-\sqrt{2}z} \right\}, \quad \mathbb{P}_x\mbox{-a.s.}
		\end{equation}
		Since $\mathbf{1}_{\left\{ \widetilde{M}_t\leq z\right\}} + \mathbf{1}_{\left\{ \widetilde{M}_t > z\right\}} = 1$, we have
		\begin{equation}
			\limsup_{T\uparrow\infty} \frac{1}{T} \int_0^T \mathbf{1}_{\left\{ \widetilde{M}_t > z\right\}} \mathrm{d}t \leq 1-\exp\left\{ -C_* \Z_{\infty} e^{-\sqrt{2}z} \right\}, \quad \mathbb{P}_x\mbox{-a.s.}
		\end{equation}
		To prove Theorem \ref{theorem_ergodic}, it suffices to show that
		\begin{equation}\label{Mt_liminf}
			\liminf_{T\uparrow\infty} \frac{1}{T} \int_0^T \mathbf{1}_{\left\{ \widetilde{M}_t > z\right\}} \mathrm{d}t \geq 1-\exp\left\{ -C_* \Z_{\infty} e^{-\sqrt{2}z} \right\}, \quad \mathbb{P}_x\mbox{-a.s.}
		\end{equation} 
		Since $\widetilde{M}_t^s = \max\left\{\widetilde{M}_t, \widetilde{M}_t^{[s,t]}\right\}$, we have
		\begin{equation}
			\mathbf{1}_{\left\{ \widetilde{M}_t^s > z\right\}} \leq \mathbf{1}_{\left\{ \widetilde{M}_t > z\right\}} + \mathbf{1}_{\left\{ \widetilde{M}_t^{[s,t]} > z\right\}},
		\end{equation}
		that is,
		\begin{equation}
			\mathbf{1}_{\left\{ \widetilde{M}_t > z\right\}} \geq \mathbf{1}_{\left\{ \widetilde{M}_t^s > z\right\}} - \mathbf{1}_{\left\{ \widetilde{M}_t^{[s,t]} > z\right\}}.
		\end{equation}
		For $s=R_T$, we have
		\begin{equation}\label{ergodic_liminf_lowerbound}
			\liminf_{T\uparrow\infty} \frac{1}{T} \int_{\epsilon T}^T \mathbf{1}_{\left\{ \widetilde{M}_t > z\right\}} \mathrm{d}t \geq \liminf_{T\uparrow\infty} \frac{1}{T} \int_{\epsilon T}^T \mathbf{1}_{\left\{ \widetilde{M}_t^{R_T} > z\right\}} \mathrm{d}t - \limsup_{T\uparrow\infty} \frac{1}{T} \int_{\epsilon T}^T \mathbf{1}_{\left\{ \widetilde{M}_t^{[R_T,t]} > z\right\}} \mathrm{d}t.
		\end{equation}
		For any $\epsilon>0$, by \eqref{ergodic_liminf_lowerbound} and Lemma \ref{lemma_M_RT_t}, we get that
		\begin{align}
			\liminf_{T\uparrow\infty} \frac{1}{T} \int_0^T \mathbf{1}_{\left\{ \widetilde{M}_t > z\right\}} \mathrm{d}t \geq \liminf_{T\uparrow\infty} \frac{1}{T} \int_{\epsilon T}^T \mathbf{1}_{\left\{ \widetilde{M}_t > z\right\}} \mathrm{d}t
			\geq \liminf_{T\uparrow\infty} \frac{1}{T} \int_{\epsilon T}^T \mathbf{1}_{\left\{ \widetilde{M}_t^{R_T} > z\right\}} \mathrm{d}t.
		\end{align}
		Therefore,
		\begin{align}
			\liminf_{T\uparrow\infty} &\frac{1}{T} \int_0^T \mathbf{1}_{\left\{ \widetilde{M}_t > z\right\}} \mathrm{d}t \geq \liminf_{T\uparrow\infty} \left(\frac{1}{T} \int_{0}^T \mathbf{1}_{\left\{ \widetilde{M}_t^{R_T} > z\right\}} \mathrm{d}t - \frac{1}{T} \int_{0}^{\epsilon T} \mathbf{1}_{\left\{ \widetilde{M}_t^{R_T} > z\right\}} \mathrm{d}t \right)\\
			&\geq\liminf_{T\uparrow\infty} \frac{1}{T} \int_{0}^T \mathbf{1}_{\left\{ \widetilde{M}_t^{R_T} > z\right\}} \mathrm{d}t - \epsilon \geq 1 - \exp\left\{ -C_* \Z_{\infty} e^{-\sqrt{2}z} \right\} - \epsilon,
		\end{align}
		where the last inequality follows from Lemma \ref{lemma_Mt_truncated}.
		Let $\epsilon \downarrow 0$, then the inequality \eqref{Mt_liminf} holds. This completes the proof.
	\end{proof}
	
	\section{Proof of Lemma \ref{lemma_Mt_truncated}}\label{Sec:truncated}
	In this section, we prove Lemma \ref{lemma_Mt_truncated}, which states that for any $x>0$ and $z\in\mathbb{R}$, 
	\begin{equation}
		\lim_{T\uparrow\infty} \frac{1}{T} \int_0^T \mathbf{1}_{\left\{ \widetilde{M}_t^{R_T}\leq z\right\}} \mathrm{d}t = \exp\left\{ -C_* \Z_{\infty} e^{-\sqrt{2}z} \right\}, \quad \mathbb{P}_x\mbox{-a.s.}
	\end{equation}
    Let $\D = [d,D]$ with $-\infty<d<D<\infty$ which is a compact set. Similar to \cite{ABK13b}, this lemma follows from the following two lemmas.
	
	\begin{lemma}\label{lemma_condi}
		For $\epsilon>0$, and $R_T$ as in Lemma \ref{lemma_Mt_truncated}. Then for any $s\in [\epsilon,1]$,
		\begin{equation}
			\lim_{T\uparrow\infty} \mathbb{P}_x\left[ \widetilde{M}^{R_T}_{T\cdot s} \in \D \,|\, \F_{R_T} \right] = \int_{\D} \mathrm{d}\left( \exp\left\{-C_*\Z_{\infty} e^{-\sqrt{2}z} \right\} \right),\quad \mathbb{P}_x\mbox{-a.s.}
		\end{equation}
	\end{lemma}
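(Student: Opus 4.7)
The strategy is to adapt the argument from \cite{ABK13b}, with the classical derivative-martingale limit $Z_\infty$ replaced by the absorbed-BBM limit $\Z_\infty$ provided by \cite[Theorem 2.1]{YZ23}. Since $\mathbf{1}_{\{\widetilde{M}^{R_T}_{T\cdot s}\in \D\}} = \mathbf{1}_{\{\widetilde{M}^{R_T}_{T\cdot s}\leq D\}} - \mathbf{1}_{\{\widetilde{M}^{R_T}_{T\cdot s}\leq d\}}$ and $z\mapsto \exp\{-C_*\Z_\infty e^{-\sqrt{2}z}\}$ is continuous, it is enough to prove, for each fixed $z\in\R$, that
\begin{equation}
\lim_{T\uparrow\infty} \mathbb{P}_x[\widetilde{M}^{R_T}_{T\cdot s}\leq z \,|\, \F_{R_T}] = \exp\{-C_*\Z_\infty e^{-\sqrt{2}z}\}, \quad \mathbb{P}_x\mbox{-a.s.}
\end{equation}

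The first step is to exploit the branching Markov property at time $R_T$. Because the absorbing barrier has been truncated at $R_T$, conditional on $\F_{R_T}$ the descendants of each $u\in\widetilde{N}_{R_T}$ form independent copies of ordinary BBM issued from $X_u(R_T)$, so
\begin{equation}
\mathbb{P}_x[\widetilde{M}^{R_T}_{T\cdot s}\leq z \,|\, \F_{R_T}] = \prod_{u\in \widetilde{N}_{R_T}} F_{T s-R_T}(m_{T s}+z-X_u(R_T)),
\end{equation}
where $F_t(y):=\mathbb{P}(\mathbf{M}_t\leq y)$. Since $s\geq\epsilon$ and $R_T=o(\sqrt{T})$, we have $T s-R_T\to\infty$ and $m_{T s}-m_{T s-R_T}=\sqrt{2}R_T+o(1)$. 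A quantitative form of Bramson's theorem then gives
\begin{equation}
F_{T s-R_T}(m_{T s}+z-X_u(R_T)) = w(z+\sqrt{2}R_T-X_u(R_T))+o(1),
\end{equation}
with an error uniform in $X_u(R_T)$.

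The second step converts the product into the derivative martingale. Using the tail asymptotic $1-w(y)\sim C_* y\, e^{-\sqrt{2}y}$ as $y\to\infty$ together with $\log(1-x)=-x+O(x^2)$, summing over $u\in\widetilde{N}_{R_T}$ yields
\begin{equation}
\log \mathbb{P}_x[\widetilde{M}^{R_T}_{T\cdot s}\leq z \,|\, \F_{R_T}] = -C_* e^{-\sqrt{2}z}\,\Z_{R_T} - C_* z\, e^{-\sqrt{2}z}\sum_{u\in\widetilde{N}_{R_T}} e^{\sqrt{2}(X_u(R_T)-\sqrt{2}R_T)} + o(1).
\end{equation}
The first term tends $\mathbb{P}_x$-a.s.\ to $-C_*e^{-\sqrt{2}z}\Z_\infty$ by \cite[Theorem 2.1]{YZ23}, while the additive-martingale type sum vanishes $\mathbb{P}_x$-a.s.\ as $R_T\to\infty$ (the absorbed-BBM analogue of the fact that the classical critical additive martingale tends to zero). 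Exponentiating gives the desired limit.

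The main obstacle is making these three layers of approximation simultaneously valid $\mathbb{P}_x$-a.s. One needs: (i) a quantitative version of Bramson's theorem strong enough to control $F_t-w$ uniformly over the possibly exponentially many factors of the product; (ii) negligibility of particles with $X_u(R_T)$ near or above $\sqrt{2}R_T$, where the tail asymptotic for $1-w$ breaks down, handled by a large-deviation estimate on the right tail of the absorbed-BBM maximum at time $R_T$; and (iii) almost-sure control of the quadratic remainder from $\log(1-x)+x=O(x^2)$, which reduces to showing that $\sum_{u\in\widetilde{N}_{R_T}}(\sqrt{2}R_T-X_u(R_T))^2 e^{2\sqrt{2}(X_u(R_T)-\sqrt{2}R_T)}$ tends to $0$. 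The growth condition $R_T/T^l\uparrow\infty$ is designed precisely to guarantee these almost-sure convergences via Borel--Cantelli along subsequences.
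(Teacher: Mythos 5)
Your proposal follows essentially the same route as the paper: reduce to the distribution function at a single level $z$, apply the branching Markov property at time $R_T$ to write the conditional probability as a product over $\widetilde N_{R_T}$, use a uniform Bramson-type estimate (the paper directly invokes \cite[Lemma 4]{ABK13b}) together with the a.s.\ divergence $\min_{u\in N_{R_T}}(\sqrt{2}R_T-X_u(R_T))\to\infty$ from Lalley--Sellke to linearize the logarithm, and then split the resulting sum into an additive-martingale part (which vanishes, by \cite[Theorem 1(ii)]{Kyprianou04}) and a derivative-martingale part (which converges to $\Z_\infty$, by \cite[Theorem 2.1]{YZ23}). One small inaccuracy: the growth condition $R_T/T^{l}\uparrow\infty$ is not needed for the almost-sure convergences in this lemma (they only require $R_T\to\infty$, while $R_T=o(\sqrt T)$ controls $m_{Ts}-m_{Ts-R_T}$); the polynomial lower bound on $R_T$ is used instead in the Borel--Cantelli argument of Lemma \ref{lemma_M_RT_t}.
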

	
	\begin{lemma}\label{lemma_rest}
		For $\epsilon>0$, and $R_T$ as in Lemma \ref{lemma_Mt_truncated},
		\begin{equation}
			\lim_{T\uparrow\infty} \frac{1}{T} \int_{\epsilon T}^T \left( \mathbf{1}_{\left\{ \widetilde{M}^{R_T}_t \in \D \right\}} - \mathbb{P}_x \left[ \widetilde{M}^{R_T}_{t} \in \D \,|\, \F_{R_T} \right] \right) \mathrm{d}t = 0, \quad \mathbb{P}_x\mbox{-a.s.}
		\end{equation}
	\end{lemma}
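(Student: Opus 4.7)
The plan is to establish the result by a second-moment estimate combined with Borel--Cantelli along a dyadic subsequence. Set
\[
Y_t := \mathbf{1}_{\{\widetilde{M}^{R_T}_t \in \D\}} - \mathbb{P}_x\bigl[\widetilde{M}^{R_T}_t \in \D \,\big|\, \F_{R_T}\bigr], \qquad X_T := \frac{1}{T}\int_{\epsilon T}^T Y_t \,\mathrm{d}t.
\]
Since $\E[Y_t \,|\, \F_{R_T}] = 0$, Fubini yields
\begin{equation}
\E\bigl[X_T^2 \,\big|\, \F_{R_T}\bigr] = \frac{2}{T^2}\int_{\epsilon T}^T \mathrm{d}t \int_t^T \mathrm{d}s\, \mathrm{Cov}\bigl(\mathbf{1}_{\{\widetilde{M}^{R_T}_t\in\D\}},\mathbf{1}_{\{\widetilde{M}^{R_T}_s\in\D\}} \,\big|\, \F_{R_T}\bigr).
\end{equation}
The crucial observation is the branching decomposition: conditionally on $\F_{R_T}$, the process $\{\widetilde{M}^{R_T}_t + m_t\}_{t\geq R_T}$ is the maximum, over the (random but $\F_{R_T}$-measurable) collection of independent BBMs emanating from the particles in $\widetilde{N}_{R_T}$, of their unrestricted maxima. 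Hence the inner conditional covariance becomes a covariance of functionals of a conditionally independent family of BBMs evaluated at the two times $t - R_T$ and $s - R_T$.

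The heart of the proof is a decorrelation estimate: there exist $\alpha>0$ and an $\F_{R_T}$-measurable $C(\omega)$ with $\E_x[C(\omega)]<\infty$ such that, for all $\epsilon T\leq s\leq t\leq T$ with $t-s\geq 1$,
\begin{equation}
\bigl|\mathrm{Cov}\bigl(\mathbf{1}_{\{\widetilde{M}^{R_T}_t\in\D\}},\mathbf{1}_{\{\widetilde{M}^{R_T}_s\in\D\}} \,\big|\, \F_{R_T}\bigr)\bigr| \leq C(\omega)\,(t-s)^{-\alpha}.
\end{equation}
I would establish this by adapting the short-range/long-range genealogy dichotomy of \cite{ABK13b}: for each of the independent BBMs started from $v \in \widetilde{N}_{R_T}$, a pair of extremal particles at times $t-R_T$ and $s-R_T$ either split close to time $R_T$ (a rare event, controlled by first-moment / ballot-type bounds producing the factor $(t-s)^{-\alpha}$) or close to their respective extremal times (in which case they evolve independently over a window of length of order $t-s$, so their contributions to the two maxima decouple up to a vanishing error). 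Summing the resulting contributions over $v\in\widetilde{N}_{R_T}$ produces the constant $C(\omega)$, whose finite expectation is ensured by the moment control on $\Z_{R_T}$ and on the additive martingale of the truncated BBM from \cite{YZ23} together with the hypotheses on $R_T$.

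Integrating the decorrelation bound (and bounding the covariance trivially for $t-s\leq 1$) yields $\E[X_T^2]\leq C''T^{-\alpha\wedge 1}$. Along the subsequence $T_n:=2^n$, Chebyshev's inequality and Borel--Cantelli give $X_{T_n}\to 0$, $\mathbb{P}_x$-a.s. To pass to general $T\in[T_n,T_{n+1}]$, I would use the monotonicity $s\mapsto \widetilde{N}^s_t$ (decreasing in $s$, since the survival constraint becomes more stringent) to sandwich $\widetilde{M}^{R_T}_t$ between $\widetilde{M}^{R_{T_n}}_t$ and $\widetilde{M}^{R_{T_{n+1}}}_t$, and absorb the remaining discrepancy by invoking Lemma \ref{lemma_M_RT_t} applied to the narrow absorption window $[R_{T_n},R_{T_{n+1}}]$.

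The main obstacle will be the decorrelation estimate. In \cite{ABK13b} one works with a single BBM started from a fixed location; here the family of independent BBMs starts from the random, a priori unbounded positions $X_v(R_T)$ for $v\in\widetilde{N}_{R_T}$, and every tail bound obtained for an individual BBM must be integrated against this random configuration. Achieving almost-sure, rather than merely $L^1$, summability of the resulting error requires quantitative moment control uniformly in the truncation time $R_T$, which is precisely what dictates the two-sided conditions $R_T=o(\sqrt{T})$ and $R_T/T^l\uparrow\infty$ imposed in Lemma \ref{lemma_Mt_truncated}.
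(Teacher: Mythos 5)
Your top-level strategy is the same as the paper's: a second-moment bound on $X_T := T^{-1}\int_{\epsilon T}^T Y_t\,\mathrm{d}t$ reduced to a decorrelation estimate on the conditional covariance, followed by Chebyshev and Borel--Cantelli along a subsequence and an interpolation step. However, you have left the actual content of the lemma, the decorrelation estimate, as a sketch, and in doing so you have omitted the device that makes the estimate provable. The paper does not attack the conditional covariance of $\mathbf{1}_{\{\widetilde{M}^{R_T}_t\in\D\}}$ directly; it first replaces $\widetilde{M}^{R_T}_t$ by a \emph{localized} version $\widetilde{M}^{R_T}_{t,loc}$, restricting to particles whose paths stay in the entropic tube $F_{\beta,t}(s)\le X_u(s)\le F_{\alpha,t}(s)$ on $(r_T,t-r_T)$ with $r_T=(20\ln T)^{1/\delta}$. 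This serves two essential purposes at once: (i) it is the mechanism through which the ABK covariance bound (their Proposition 10 / Theorem 9) is actually proved, and (ii) it is what lets one ignore the absorbing barrier after time $R_T$, via the observation that $F_{\beta,t}(s)-\rho s>0$ for $s>R_T$, so that the conditional law given $\F_{R_T}$ reduces cleanly to that of ordinary BBMs. Your ``short-range/long-range genealogy dichotomy'' paragraph gestures at this, but without localization the ballot-type bounds you invoke do not close: ABK's decorrelation proof only controls pairs of particles that stay in the tube, and the contribution of the non-localized ones must be bounded separately, which is exactly what the paper does with the $T^{-20}$ tail estimates.

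There is also a subtle obstruction that the paper has to repair and your proposal does not notice. Because $\D=[d,D]$ is a compact interval, the event $\{\widetilde{M}^{R_T}_{t,loc}\in\D\}$ is \emph{not} contained in $\{\widetilde{M}^{R_T}_t\in\D\}$: the true maximizer can exceed $D$ while being non-localized. ABK's Proposition 6 only bounds the non-localized contribution on a compact window, so the paper adds Claim A (a version of the localization estimate for the half-line $[D,\infty)$) to bound $\mathbb{P}_x(\{\widetilde{M}^{R_T}_{t,loc}\in\D\}\setminus\{\widetilde{M}^{R_T}_t\in\D\})$. A version of this issue reappears in your interpolation step: the sandwich $\widetilde{M}^{R_{T_{n+1}}}_t\le\widetilde{M}^{R_T}_t\le\widetilde{M}^{R_{T_n}}_t$ is correct (monotonicity of $s\mapsto\widetilde{N}^s_t$), but it does not give monotonicity of $\mathbf{1}_{\{\cdot\in\D\}}$ for a bounded interval $\D$, so the passage from the dyadic subsequence to all $T$ needs an extra argument (splitting $\D$ into $\le D$ and $\le d$ and handling each half-line, or again invoking the localization / Claim A machinery). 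In short, the approach is sound in outline and agrees with the paper's at the level of strategy, but the decorrelation estimate as you state it is unproven, the localization step that makes it provable is missing, and the $[D,\infty)$ tail and interval-versus-half-line issues are unaddressed.
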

	\begin{proof}[Proof of Lemma \ref{lemma_condi}]
		First, we write
		\begin{equation}
			\mathbb{P}_x\left[ \widetilde{M}^{R_T}_{T\cdot s} \in \D \,|\, \F_{R_T} \right] = \mathbb{P}_x\left[ \widetilde{M}^{R_T}_{T\cdot s} \leq D \,|\, \F_{R_T} \right] - \mathbb{P}_x\left[ \widetilde{M}^{R_T}_{T\cdot s} \leq d \,|\, \F_{R_T} \right].
		\end{equation}
		We only need to show the almost surely convergence of the first term. Recall that the definitions \eqref{def_widetilde_N_t}, \eqref{def_widetilde_N_t^s} and \eqref{def_widetilde_M_t^s}, then
		\begin{align}
			\mathbb{P}_x \left[ \widetilde{M}^{R_T}_{T\cdot s} \leq D  \,|\, \F_{R_T} \right] &= \prod_{u\in \widetilde{N}_{R_T}} \mathbb{P}_x \left[ X_u(R_T) + M_{T\cdot s - R_T}(u) + m_{T\cdot s - R_T} \leq D + m_{T\cdot s}  \,|\, \F_{R_T} \right]\\
			= \prod_{u\in \widetilde{N}_{R_T}}& \left(1 - \mathbb{P}_x \left[ M_{T\cdot s - R_T}(u) > D - X_u(R_T) + \sqrt{2}R_T + o_T(1) \,|\, \F_{R_T} \right]\right)
		\end{align}
		where given $\F_{R_T}$, $\{M_{T\cdot s - R_T}(u), u\in \widetilde{N}_{R_T}\}$ are independent and have the same distribution as $\{M_{T\cdot s - R_T}, \mathbb{P}_0\}$. Note that $o_T(1)\rightarrow 0$ as $T\rightarrow \infty$. After time $R_T$, there is no absorption barrier when $\widetilde{M}^{R_T}_{T\cdot s}$ is considered. Therefore, we can use a similar argument as in \cite{ABK13b}. By \cite[equation (20) on p. 1055]{Lalley87}, 
		\begin{equation}\label{min_R_T}
			\lim_{R_T\rightarrow\infty} \min_{u\in N_{R_T}} (\sqrt{2}R_T - X_u(R_T)) = + \infty \quad \mbox{ a.s. }
		\end{equation}
		Let $f(D, R_T) := D - X_u(R_T) + \sqrt{2}R_T + o_T(1)$. By \cite[Lemma 4]{ABK13b}, we have
		\begin{align}
			\mathbb{P}_x \left[ M_{T\cdot s - R_T}(u) > f(D, R_T) \,|\, \F_{R_T} \right]
			= C_* (1+o_r(1)) (1+o_T(1))f(D, R_T) e^{-\sqrt{2} f(D, R_T) },
		\end{align}
		and by \eqref{min_R_T}, this probability tends to zero uniformly for $u\in\widetilde{N}_{R_T}$ as $T\rightarrow\infty$. 
		Hence, 
		\begin{align}
			\mathbb{P}_x \left[ \widetilde{M}^{R_T}_{T\cdot s} \leq D  \,|\, \F_{R_T} \right]
			&= \exp\bigg{(} \sum_{u\in \widetilde{N}_{R_T}} \log \left(1 - \mathbb{P}_x \left[ M_{T\cdot s - R_T}(u) > f(D, R_T) \,|\, \F_{R_T} \right]\right) \bigg{)}\\
			&= \exp\bigg{(} - \sum_{u\in \widetilde{N}_{R_T}} C_* (1+o_r(1)) (1+o_T(1))^2f(D, R_T) e^{-\sqrt{2} f(D, R_T) } \bigg{)}
		\end{align}
	    By the results on the additive martingale of BBM and the ``derivative martingale" of BBM with absorption given in \cite[Theorem 1(ii)]{Kyprianou04} and \cite[Theorem 2.1]{YZ23}, respectively, 
		we have that
		\begin{equation}
			\lim_{T\uparrow\infty}\sum_{u\in \widetilde{N}_{R_T}} D e^{-\sqrt{2}(\sqrt{2}R_T-X_u(R_T))} \leq \lim_{T\uparrow\infty}\sum_{u\in N_{R_T}} D e^{-\sqrt{2}(\sqrt{2}R_T-X_u(R_T))} = 0
		\end{equation}
		and
		\begin{equation}
			\lim_{T\uparrow\infty}\sum_{u\in \widetilde{N}_{R_T}} (\sqrt{2}R_T-X_u(R_T)) e^{-\sqrt{2}(\sqrt{2}R_T-X_u(R_T))} = \Z_{\infty}.
		\end{equation}
		Therefore, we get 
		\begin{align}
			\lim_{T\uparrow\infty} \mathbb{P}_x \left[ \widetilde{M}^{R_T}_{T\cdot s} \leq D  \,|\, \F_{R_T} \right] &= \exp\bigg{(} -  C_* (1+o_r(1)) \lim_{T\uparrow\infty} \sum_{u\in \widetilde{N}_{R_T}} f(D, R_T) e^{-\sqrt{2} f(D, R_T) } \bigg{)}\\
			&= \exp\left( -  C_* (1+o_r(1)) e^{-\sqrt{2}D} \Z_{\infty} \right).
		\end{align}
		Letting $r\uparrow\infty$ yields that 
		\begin{equation}
			\lim_{T\uparrow\infty} \mathbb{P}_x \left[ \widetilde{M}^{R_T}_{T\cdot s} \leq D  \,|\, \F_{R_T} \right] = \exp\left( -  C_* e^{-\sqrt{2}D} \Z_{\infty} \right).
		\end{equation}
		This completes the proof of Lemma \ref{lemma_condi}.
	\end{proof}
	
	\begin{proof}[Proof of Lemma \ref{lemma_rest}]
		Since after time $R_T$, the particle behaves as a branching Brownian motion, most of the proof of \cite[Theorem 3]{ABK13b} is valid for Lemma \ref{lemma_rest}. Now we provide the details.
		
		Fix $x>0$. For $\gamma>0$, $0\leq s \leq t$, define
		\begin{equation}
			F_{\gamma,t}(s) := x + \frac{s}{t}m_t - \min\{s^{\gamma}, (t-s)^{\gamma} \}. 
		\end{equation}
		Choose $0<\alpha<1/2<\beta<1$. We say that a particle $u\in N_t$ is \textit{localized} in the time $t$-tube during the interval $(r,t-r)$ if and only if
		\begin{equation}
			F_{\beta,t}(s) \leq X_u(s) \leq F_{\alpha,t}(s), \, \forall s\in (r,t-r).
		\end{equation}
		Otherwise, we say that it is \textit{not localized}. By \cite[Proposition 6]{ABK13b}, for given $\D=[d,D]$. There exist $r_0, \delta>0$ depending on $\alpha,\beta$ and $\D$ such that for $r\geq r_0$
		\begin{equation}\label{local_estimate}
			\sup_{t\geq 3r} \mathbb{P}_x \left[ \exists u\in N_t: X_u(t) - m_t \in \D \mbox{ but $u$ \textit{not localized} during } (r,t-r) \right] \leq \exp\{-r^{\delta} \}.
		\end{equation}
		Choose $r_T = (20\ln T)^{1/\delta}$. For any $t\in (R_T,T)$, define
		\begin{equation}
			\widetilde{M}_{t,loc}^{R_T} := \max\left\{X_u(t): u\in \widetilde{N}_t^{R_T}, u \textit{ localized} \mbox{ during } (r_T,t-r_T) \right\} - m_t.
		\end{equation}
		Then, we have
		\begin{align}
			\mathbb{P}_x &\left( \left\{\widetilde{M}_{t}^{R_T} \in \D\right\} \setminus \left\{ \widetilde{M}_{t,loc}^{R_T} \in \D \right\} \right)\\ &\leq \mathbb{P}_x \left[ \exists u\in \widetilde{N}_t^{R_T}: X_u(t) - m_t \in \D \mbox{ but $u$ \textit{not localized} during } (r_T,t-r_T) \right]\\
			&\leq \mathbb{P}_x \left[ \exists u\in N_t: X_u(t) - m_t \in \D \mbox{ but $u$ \textit{not localized} during } (r_T,t-r_T) \right].
		\end{align}
		Hence, \eqref{local_estimate} implies that
		\begin{equation}
			\mathbb{P}_x \left( \left\{\widetilde{M}_{t}^{R_T} \in \D\right\} \setminus \left\{ \widetilde{M}_{t,loc}^{R_T} \in \D \right\} \right) \leq \frac{1}{T^{20}}.
		\end{equation}
		Since $\{\widetilde{M}_{t}^{R_T}\geq D \} \cap \{\widetilde{M}_{t,loc}^{R_T} \in \D \} \neq \emptyset$, the inequality $\mathbb{P}_x (\widetilde{M}_{t}^{R_T} \in \D) - \mathbb{P}_x (\widetilde{M}_{t,loc}^{R_T} \in \D) \geq 0$
		is not true. The same issue exists in \cite[(4.11)]{ABK13b}. However, we have the following claim with a slight modification from their paper \cite[Theorems 2.3 and 2.5]{ABK11}.\\
		\textbf{Claim A}: There exist $r_0, \delta>0$ depending on $\alpha,\beta$ and $D$ such that for $r\geq r_0$
		\begin{equation}\label{local_estimate2}
			\sup_{t\geq 3r} \mathbb{P}_x \left[ \exists u\in N_t: X_u(t) - m_t \geq D \mbox{ but $u$ \textit{not localized} during } (r,t-r) \right] \leq \exp\{-r^{\delta} \}.
		\end{equation}
		We prove this claim in Appendix \ref{appendix}.
		Therefore,
		\begin{align}
			\mathbb{P}_x &\left( \left\{\widetilde{M}_{t,loc}^{R_T} \in \D\right\} \setminus \left\{ \widetilde{M}_{t}^{R_T} \in \D \right\} \right)\\ 
			&\leq \mathbb{P}_x \left[ \exists u\in N_t: X_u(t) - m_t \geq D \mbox{ but $u$ \textit{not localized} during } (r_T,t-r_T) \right] \leq \frac{1}{T^{20}}.
		\end{align}
		
		Let
		\begin{equation}
			\text{Rest}_{\epsilon,\D}(T) := \frac{1}{T} \int_{\epsilon T}^T \left( \mathbf{1}_{\left\{ \widetilde{M}^{R_T}_t \in \D \right\}} - \mathbb{P}_x \left[ \widetilde{M}^{R_T}_{t} \in \D \,|\, \F_{R_T} \right] \right) \mathrm{d}t,
		\end{equation}
		and
		\begin{equation}
			\text{Rest}_{\epsilon,\D}^{loc}(T) := \frac{1}{T} \int_{\epsilon T}^T \left( \mathbf{1}_{\left\{ \widetilde{M}^{R_T}_{t,loc} \in \D \right\}} - \mathbb{P}_x \left[ \widetilde{M}^{R_T}_{t,loc} \in \D \,|\, \F_{R_T} \right] \right) \mathrm{d}t.
		\end{equation}
		Using an argument similar to that in the proof of \cite[Lemma 7]{ABK13b}, it holds that
		\begin{equation}\label{rest_restloc}
			\lim_{T\uparrow\infty} \left( \text{Rest}_{\epsilon,\D}(T) - \text{Rest}_{\epsilon,\D}^{loc}(T) \right) = 0, \quad \mathbb{P}_x\mbox{-a.s.}
		\end{equation}
		For completeness, we give the details here. We have 
		\begin{align}
			\text{Rest}_{\epsilon,\D}(T) - \text{Rest}_{\epsilon,\D}^{loc}(T) =&\, \frac{1}{T} \int_{\epsilon T}^T \left( \mathbf{1}_{\left\{ \widetilde{M}^{R_T}_t \in \D \right\}} - \mathbf{1}_{\left\{ \widetilde{M}^{R_T}_{t,loc} \in \D \right\}} \right) \mathrm{d}t\\
			&\,- \frac{1}{T} \int_{\epsilon T}^T \left( \mathbb{P}_x \left[ \widetilde{M}^{R_T}_{t} \in \D \,|\, \F_{R_T} \right] -  \mathbb{P}_x \left[ \widetilde{M}^{R_T}_{t,loc} \in \D \,|\, \F_{R_T} \right] \right) \mathrm{d}t\\
			=&: \mathbf{(1)_{T,\epsilon}} - \mathbf{(2)_{T,\epsilon}}
		\end{align}
		Notice that
		\begin{align}
			&\E_x \left| \frac{1}{T} \int_{\epsilon T}^T \left( \mathbf{1}_{\left\{ \widetilde{M}^{R_T}_t \in \D \right\}} - \mathbf{1}_{\left\{ \widetilde{M}^{R_T}_{t,loc} \in \D \right\}} \right) \mathrm{d}t \right| \leq 
			\frac{1}{T} \int_{\epsilon T}^T \E_x \left| \mathbf{1}_{\left\{ \widetilde{M}^{R_T}_t \in \D \right\}} - \mathbf{1}_{\left\{ \widetilde{M}^{R_T}_{t,loc} \in \D \right\}} \right| \mathrm{d}t\\
			&\leq \frac{1}{T} \int_{\epsilon T}^T \left[\mathbb{P}_x \left( \left\{\widetilde{M}_{t}^{R_T} \in \D\right\} \setminus \left\{ \widetilde{M}_{t,loc}^{R_T} \in \D \right\} \right) + \mathbb{P}_x \left( \left\{\widetilde{M}_{t,loc}^{R_T} \in \D\right\} \setminus \left\{ \widetilde{M}_{t}^{R_T} \in \D \right\} \right)  \right] \mathrm{d}t\\
			&\leq \frac{2}{T^{20}}.
		\end{align}
		Then, it follows from Markov's inequality that
		\begin{align}
			\mathbb{P}\left( |\mathbf{(1)_{T,\epsilon}}| > \delta \right) \leq \frac{1}{\delta} \E_x \left| \frac{1}{T} \int_{\epsilon T}^T \left( \mathbf{1}_{\left\{ \widetilde{M}^{R_T}_t \in \D \right\}} - \mathbf{1}_{\left\{ \widetilde{M}^{R_T}_{t,loc} \in \D \right\}} \right) \mathrm{d}t \right| \leq \frac{2}{\delta T^{20}},
		\end{align}
		which is summable over $T$ (noticing that we may take $T\in\mathbb{N}$). Hence, by Borel-Cantelli lemma,
		\begin{align}
			\mathbb{P}\left( \{ |\mathbf{(1)_{T,\epsilon}}| > \delta \} \mbox{ infinitely often} \right) = 0.
		\end{align}
		Since this holds for every $\delta>0$, it follows that $\lim_{T\uparrow \infty}  \mathbf{(1)_{T,\epsilon}} = 0$ $\mathbb{P}$-a.s. The proof of $\lim_{T\uparrow \infty}  \mathbf{(2)_{T,\epsilon}} = 0$ is analogous. Equation \eqref{rest_restloc} is therefore established.
		Then, it suffices to prove that $\lim_{T\uparrow\infty} \text{Rest}_{\epsilon,\D}^{loc}(T) = 0$, $\mathbb{P}_x\mbox{-a.s.}$
		
		Define
		\begin{equation}
			X_t^{\{D\}} :=  \mathbf{1}_{\left\{ \widetilde{M}^{R_T}_{t,loc} \leq D \right\}} - \mathbb{P}_x \left[ \widetilde{M}^{R_T}_{t,loc} \leq D \,|\, \F_{R_T} \right] \mbox{ and } \widehat{C}_T(t,t') := \mathbb{E}_x\left[X_t^{\{D\}}\cdot X_{t'}^{\{D\}}\right].
		\end{equation}
		Then, we have
		\begin{align}
			\text{Rest}_{\epsilon,\D}^{loc}(T) = \frac{1}{T} \int_{\epsilon T}^T 	X_s^{\{D\}} \mathrm{d} s - \frac{1}{T} \int_{\epsilon T}^T X_s^{\{d\}} \mathrm{d} s. 
		\end{align} 
		By \cite[Theorem 8]{ABK13b}, we only need to verify that both integrals satisfy the assumptions of this theorem. Once this is done, we can conclude that $\frac{1}{T}\int_{\epsilon T}^T X_s^{\{D\}} \mathrm{d} s$ and $\frac{1}{T} \int_{\epsilon T}^T X_s^{\{d\}} \mathrm{d} s$ converge to $0$ as $T\to\infty$.
		By definition, $|X_s^{\{D\}}|\leq 2$ almost surely for all $s>0$, and $\E[X_s^{\{D\}}] = 0$. Hence, it is enough to check that
		\begin{align}
			\sum_{T=1}^{\infty} \frac{1}{T} \E \left[ \left| \frac{1}{T}\int_{\epsilon T}^T X_s^{\{D\}} \mathrm{d} s \right|^2 \right] < \infty.
		\end{align}
		Following the computations in \cite[equations (4.28), (4.29)]{ABK13b}, we obtain
		\begin{align}
			\sum_{T=1}^{\infty} &\frac{1}{T} \E \left[ \left| \frac{1}{T}\int_{\epsilon T}^T X_s^{\{D\}} \mathrm{d} s \right|^2 \right] = 2\sum_{T=1}^{\infty} \frac{1}{T^3} \int_{\epsilon T}^T \mathrm{d} s \int_s^T \mathrm{d} s' \widehat{C}_T(s,s')\\
			&= 2\sum_{T=1}^{\infty} \frac{1}{T^3} \int_{\epsilon T}^T \mathrm{d} s \int_s^{s+T^{\xi}} \mathrm{d} s' \widehat{C}_T(s,s') + 2\sum_{T=1}^{\infty} \frac{1}{T^3} \int_{\epsilon T}^T \mathrm{d} s \int_{s+T^{\xi}}^T \mathrm{d} s' \widehat{C}_T(s,s'), \label{sum_C_T}
		\end{align}
		where $\xi\in(0,1)$ is a fixed constant. The first term in \eqref{sum_C_T} is bounded by $2 \sum_{T=1}^{\infty} \frac{1}{T^3} T\cdot T^{\xi} \cdot 1$, which is finite. It remains to show that the second term in \eqref{sum_C_T} is also finite. For this, it suffices to verify the validity of \cite[Theorem 9]{ABK13b} for this $\widehat{C}_T(t,t')$. 
		
		Theorem 9 of \cite{ABK13b} provides a key estimate, those proof occupies a substantial portion of the original paper. In our setting, due to the definition of $\widetilde{N}^{R_T}_{t}$, the absorption barrier exists only until time $R_T$. Therefore, the majority of the proof in \cite[Theorem 9]{ABK13b} remains valid for our choice of $\widehat{C}_T(t,t')$. Below we outline only the parts that differ from their argument and explain why the estimate still holds for our $\widehat{C}_T(t,t')$. 
		Define
		\begin{align}\label{def_c_T}
			\hat{c}_T(I,J) :=&\, \mathbb{P}_x \left[ \widetilde{M}^{R_T}_{I,loc} \leq D, \widetilde{M}^{R_T}_{J,loc} \leq D \,|\, \F_{R_T} \right]\\
			&- \mathbb{P}_x \left[ \widetilde{M}^{R_T}_{I,loc} \leq D \,|\, \F_{R_T} \right] \mathbb{P}_x \left[ \widetilde{M}^{R_T}_{J,loc} \leq D \,|\, \F_{R_T} \right],
		\end{align}
		where we use $I$ and $J$ to denote the two times $t,t'$, respectively. Then $\widehat{C}_T(I,J) := \mathbb{E}_x \hat{c}_T(I,J)$. Let 
		\begin{equation}
			I_T:= I-R_T \mbox{ and } J_T:= J-R_T.
		\end{equation}
		Given $X_u(R_T)$, denote by $M^{R_T,u}_{I_T,loc}$ the maximum shifted by $m_{I_T}$ of the positions of all particles $v\in N_{I_T}$ whose paths satisfy
		\begin{align}
			 X_v(s)\geq \sqrt{2}R_T-X_u(R_T) + \frac{s}{I} m_I - \min\{(R_T+s)^{\beta},(I-R_T-s)^{\beta}\}, \\
			 X_v(s) \leq \sqrt{2}R_T-X_u(R_T) + \frac{s}{I} m_I - \min\{(R_T+s)^{\alpha},(I-R_T-s)^{\alpha}\},
		\end{align} 
		for all $s\in [0,I_T-r_T]$, the ``shifted" $I$-tube. Similar to \cite[(5.4)]{ABK13b}, we know that
		\begin{align}
			&\mathbb{P}_x \left[ \widetilde{M}^{R_T}_{I,loc} \leq D, \widetilde{M}^{R_T}_{J,loc} \leq D \,|\, \F_{R_T} \right] \\
			=& \prod_{u\in \widetilde{\star}} \mathbb{P}_x \left( M^{R_T,u}_{I_T,loc} \leq D + \sqrt{2}R_T - X_u(R_T), M^{R_T,u}_{J_T,loc} \leq D + \sqrt{2}R_T - X_u(R_T) | X_u(R_T) \right),
		\end{align}
		where $\widetilde{\star}$ denotes the set of particles $u\in\widetilde{N}_{R_T}$ whose paths are \textit{localized} in the intersection of the $I$-tube and $J$-tube during the interval $(r_T,R_T)$. Because the required control can be reduced to restrictions on the localized positions at time $R_T$, we introduce the set
		\begin{equation}
			\widetilde{\Delta} := \left\{u\in\widetilde{N}_{R_T}: \sqrt{2}R_T-X_u(R_T) \in (R_T^{\alpha}+\Omega_T,R_T^{\beta}+\Omega_T) \right\},
		\end{equation} 
		where $\Omega_T$ denotes a term that is negligible in the relevant asymptotics (its precise form may vary from one occurrence to another). Then, we obtain that
		\begin{align}
			&\mathbb{P}_x \left[ \widetilde{M}^{R_T}_{I,loc} \leq D, \widetilde{M}^{R_T}_{J,loc} \leq D \,|\, \F_{R_T} \right] \\
			\leq& \prod_{u\in \widetilde{\Delta}} \mathbb{P}_x \left( M^{R_T,u}_{I_T,loc} \leq D + \sqrt{2}R_T - X_u(R_T), M^{R_T,u}_{J_T,loc} \leq D + \sqrt{2}R_T - X_u(R_T) | X_u(R_T) \right).
		\end{align}

		Compared with $M_{loc}(I)$ in \cite{ABK13b}, our $\widetilde{M}^{R_T}_{I,loc}$ has an additional restriction before time $R_T$, namely, that the particles have not been absorbed. Consequently, $\widetilde{\Delta}$ is a subset of $\Delta$ defined in \cite{ABK13b}.
		Furthermore, \cite[Propostion 10]{ABK13b} remains valid for our $\hat{c}_T(I,J)$, and \cite[(5.11)]{ABK13b} also serves as an upper bound for this $\hat{c}_T(I,J)$. (Although \cite[(5.15)]{ABK13b} should be $-a-a^2\leq \ln(1-a) \leq -a$ $(0\leq a\leq 1/2)$, \cite[(5.11)]{ABK13b} is still true due to the inequality $-a\leq-a+a^2/2$.) The results of \cite{ABK13b} thus imply Lemma \ref{lemma_rest}.
	\end{proof}

	Now we turn to the proof of Lemma \ref{lemma_Mt_truncated} and the argument is similar to that in \cite{ABK13b}.
	\begin{proof}[Proof of Lemma \ref{lemma_Mt_truncated}]
		For any compact interval $\D = [d,D]$ with $-\infty<d<D<\infty$, we will show that
		\begin{equation}\label{ergodic_interval}
			\lim_{T\uparrow\infty} \frac{1}{T} \int_0^T \mathbf{1}_{\left\{ \widetilde{M}_t^{R_T}\in \mathcal{D} \right\}} \mathrm{d}t = \int_{\D} \mathrm{d}\left( \exp\left\{-C_*\Z_{\infty} e^{-\sqrt{2}z} \right\} \right),\quad \mathbb{P}_x\mbox{-a.s.}
		\end{equation}
		Note that
		\begin{align}
			\frac{1}{T} \int_0^T \mathbf{1}_{\left\{ \widetilde{M}_t^{R_T}\in \mathcal{D} \right\}} \mathrm{d}t &= \frac{1}{T} \int_0^{\epsilon T} \mathbf{1}_{\left\{ \widetilde{M}_t^{R_T}\in \mathcal{D} \right\}} \mathrm{d}t + \frac{1}{T} \int_{\epsilon T}^T \mathbf{1}_{\left\{ \widetilde{M}_t^{R_T}\in \mathcal{D} \right\}} \mathrm{d}t\\
			&= \frac{1}{T} \int_0^{\epsilon T} \mathbf{1}_{\left\{ \widetilde{M}_t^{R_T}\in \mathcal{D} \right\}} \mathrm{d}t + \frac{1}{T} \int_{\epsilon T}^T  \mathbb{P}_x \left[ \widetilde{M}^{R_T}_{t} \in \D \,|\, \F_{R_T} \right]  \mathrm{d}t\\ 
			&\quad+ \frac{1}{T} \int_{\epsilon T}^T \left( \mathbf{1}_{\left\{ \widetilde{M}^{R_T}_t \in \D \right\}} - \mathbb{P}_x \left[ \widetilde{M}^{R_T}_{t} \in \D \,|\, \F_{R_T} \right] \right) \mathrm{d}t.
		\end{align}
		By Lemma \ref{lemma_condi} and the dominated convergence theorem, it holds that
		\begin{align}
			\lim_{\epsilon \downarrow 0} & \lim_{T\uparrow\infty} \frac{1}{T} \int_{\epsilon T}^T  \mathbb{P}_x \left[ \widetilde{M}^{R_T}_{t} \in \D \,|\, \F_{R_T} \right]  \mathrm{d}t = \lim_{\epsilon \downarrow 0} \lim_{T\uparrow\infty} \int_{\epsilon}^1 \mathbb{P}_x \left[ \widetilde{M}^{R_T}_{T\cdot s} \in \D \,|\, \F_{R_T} \right]  \mathrm{d}s\\
			&= \lim_{\epsilon \downarrow 0} \int_{\epsilon}^1 \lim_{T\uparrow\infty} \mathbb{P}_x \left[ \widetilde{M}^{R_T}_{T\cdot s} \in \D \,|\, \F_{R_T} \right]  \mathrm{d}s = \int_{\D} \mathrm{d}\left( \exp\left\{-C_*\Z_{\infty} e^{-\sqrt{2}z} \right\} \right),\quad \mathbb{P}_x\mbox{-a.s.}
		\end{align}
		Combining this with Lemma \ref{lemma_rest} and $\lim_{\epsilon \downarrow 0} \lim_{T\uparrow\infty} \frac{1}{T} \int_0^{\epsilon T} \mathbf{1}_{\left\{ \widetilde{M}_t^{R_T}\in \mathcal{D} \right\}} \mathrm{d}t = 0$, we get \eqref{ergodic_interval}. 
		
		Note that
		\begin{equation}
			\lim_{D\rightarrow \infty} \int_{(D,\infty)} \mathrm{d}\left( \exp\left\{-C_*\Z_{\infty} e^{-\sqrt{2}z} \right\} \right) = \lim_{D\rightarrow \infty}\left( 1 - \exp\left\{-C_*\Z_{\infty} e^{-\sqrt{2}D} \right\} \right) = 0
		\end{equation}
		and
		\begin{equation}
			\lim_{D\rightarrow \infty}\lim_{T\uparrow\infty} \frac{1}{T} \int_0^T \mathbf{1}_{\left\{ \widetilde{M}_t^{R_T} > D \right\}} \mathrm{d}t \leq \lim_{D\rightarrow \infty}\lim_{T\uparrow\infty} \frac{1}{T} \int_0^T \mathbf{1}_{\left\{ \widetilde{M}_t > D \right\}} \mathrm{d}t = 0,\quad \mathbb{P}_x\mbox{-a.s.}
		\end{equation} 
		Combining the two displays above with \eqref{ergodic_interval}, we get that
		\begin{equation}
			\lim_{T\uparrow\infty} \frac{1}{T} \int_0^T \mathbf{1}_{\left\{ \widetilde{M}_t^{R_T}\geq d \right\}} \mathrm{d}t = 1 -  \exp\left\{-C_*\Z_{\infty} e^{-\sqrt{2}d} \right\},\quad \mathbb{P}_x\mbox{-a.s.}
		\end{equation}
		Hence \eqref{ergodic_truncated} holds. This completes the proof.
	\end{proof}
	
	\section{Proof of Lemma \ref{lemma_M_RT_t}}\label{Sec:M_RT_t}
	\begin{proof}[Proof of Lemma \ref{lemma_M_RT_t}]
		For any $u\in N_t$, define 
		\begin{equation}
			\underline{\tau}(u) := \inf\{s\in [0,t]: X_u(s) \leq \rho s \}.
		\end{equation}
		and $\inf\emptyset$ is defined by $+\infty$. Then $\underline{\tau}(u)$ represents the time when the particle $u$ or its ancestor first hits the absorption barrier. Then
		\begin{equation}
			\widetilde{N}_t^{[s,t]} = \left\{u\in N_t: \underline{\tau}(u) \in [s,t] \right\}.
		\end{equation}
		Similarly, for $0<s<s'<t$, define
		\begin{equation}
			\widetilde{N}_t^{[s,s']} = \left\{u\in N_t: \underline{\tau}(u) \in [s,s'] \right\}.
		\end{equation}
		And let $\widetilde{M}_t^{[s,s']} = \max\left\{X_u(t): u\in \widetilde{N}_t^{[s,s']} \right\} - m_t$.
		
		Let $p\in (0,1)$. Notice that $\widetilde{N}_t^{[R_T,t]} = \widetilde{N}_t^{[R_T,pt]} \cup \widetilde{N}_t^{[pt,t]}$. Therefore, 
		\begin{equation}
			\widetilde{M}_t^{[R_T,t]} \leq \max\left\{\widetilde{M}_t^{[R_T,pt]}, \, \widetilde{M}_t^{[pt,t]}  \right\},
		\end{equation}
		and moreover
		\begin{equation}\label{M_RT_t_bound}
			\mathbf{1}_{\left\{ \widetilde{M}_t^{[R_T,t]} > z\right\}} \leq \mathbf{1}_{\left\{ \widetilde{M}_t^{[R_T,pt]} > z\right\}} + \mathbf{1}_{\left\{ \widetilde{M}_t^{[pt,t]} > z\right\}}.
		\end{equation}
		The proof of \cite[Lemma 4.3]{YZ23} gave an upper bound for both $\mathbb{P}_x(\widetilde{M}_t^{[pt,t]} > z)$ and $\mathbb{P}_x(\widetilde{M}_t^{[R_T,pt]} > z)$. In \cite[Lemma 4.3]{YZ23}, let $A=-z$, then $I = \mathbb{P}_x(\widetilde{M}_t^{[pt,t]} > z)$ and $II = \mathbb{P}_x(\widetilde{M}_t^{[R_T,pt]} > z)$.
		By the proof of \cite[Lemma 4.3]{YZ23}, we have
		\begin{equation}
			I = 			\mathbb{P}_x\left(\widetilde{M}_t^{[pt,t]} > z\right) \leq \frac{C}{p^{\frac{3}{2}}} \int_{pt}^{\infty} e^{-\left( \frac{\rho}{\sqrt{2}} - 1\right)^2 r } \mathrm{d} r
		\end{equation}
		and
		\begin{align}
			II = \mathbb{P}_x\left(\widetilde{M}_t^{[R_T,pt]} > z\right) &\leq C  \Pi_x\left[\underline{\tau}_{0}^{\sqrt{2}-\rho} \mathbf{1}_{\{\underline{\tau}_{0}^{\sqrt{2}-\rho} \geq R_T \}} \right]\\
			&= C \int_{R_T}^{\infty} \frac{rx}{\sqrt{2\pi r^3}} e^{-\frac{(x-(\sqrt{2}-\rho)r)^2}{2r}} \mathrm{d} r,
		\end{align}
		where the positive constant $C$ changes line by line and depends only on $x,\rho,z$. Here, $\{B_t, \Pi_x \}$ is a standard Brownian motion starting from $x$, and $\underline{\tau}_{0}^{\sqrt{2}-\rho} := \inf\{s\geq 0: B_s \leq (\sqrt{2}-\rho)s \}$.
		
		For any $\delta>0$, by Markov's inequality and Fubini's theorem, 
		\begin{align}
			\mathbb{P}_x\left( \frac{1}{T} \int_{\epsilon T}^T \mathbf{1}_{\left\{ \widetilde{M}_t^{[pt,t]} > z\right\}} \mathrm{d}t \geq \delta \right) &\leq \frac{1}{\delta}\frac{1}{T} \int_{\epsilon T}^T \mathbb{P}\left( \widetilde{M}_t^{[pt,t]} > z\right) \mathrm{d}t\\ 
			&\leq \frac{C}{\delta T p^{3/2}} \int_{\epsilon T}^T \int_{pt}^{\infty} e^{-\left( \frac{\rho}{\sqrt{2}} - 1\right)^2 r } \mathrm{d} r \mathrm{d}t\\
			&\leq \frac{C}{\delta T p^{5/2}} e^{-\left( \frac{\rho}{\sqrt{2}} - 1\right)^2 p\epsilon T},
		\end{align}
		which is summable over $T\in \mathbb{N}$. Therefore, by Borel-Cantelli lemma,
		\begin{equation}
			\mathbb{P}_x\left[ \left\{ \frac{1}{T} \int_{\epsilon T}^T \mathbf{1}_{\left\{ \widetilde{M}_t^{[pt,t]} > z\right\}} \mathrm{d}t \geq \delta  \right\} \mbox{ infinitely often }  \right] = 0.
		\end{equation}
		Hence,
		\begin{equation}\label{ergodic_M_pt_t}
			\limsup_{T\uparrow\infty} \frac{1}{T} \int_{\epsilon T}^T \mathbf{1}_{\left\{ \widetilde{M}_t^{[pt,t]} > z\right\}} \mathrm{d}t = 0, \quad \mathbb{P}_x\mbox{-a.s.}
		\end{equation}
		Using the same argument as above,
		\begin{align}
			\mathbb{P}_x\left( \frac{1}{T} \int_{\epsilon T}^T \mathbf{1}_{\left\{ \widetilde{M}_t^{[R_T,pt]} > z\right\}} \mathrm{d}t \geq \delta \right) &\leq \frac{C}{\delta T} \int_{\epsilon T}^T  \int_{R_T}^{\infty} \frac{rx}{\sqrt{2\pi r^3}} e^{-\frac{(x-(\sqrt{2}-\rho)r)^2}{2r}} \mathrm{d} r \mathrm{d}t\\ 
			&\leq \frac{C}{\delta} \int_{R_T}^{\infty} \frac{x}{\sqrt{2\pi r}} e^{-\frac{(\sqrt{2}-\rho)^2}{2}r + (\sqrt{2}-\rho)x} \mathrm{d} r,
		\end{align}
		which is summable in $T\in\mathbb{N}$ when $R_T/T^l\rightarrow\infty$ as $T\rightarrow\infty$ for some $l>0$. Therefore,
		\begin{equation}\label{ergodic_M_RT_pt}
			\limsup_{T\uparrow\infty} \frac{1}{T} \int_{\epsilon T}^T \mathbf{1}_{\left\{ \widetilde{M}_t^{[R_T,pt]} > z\right\}} \mathrm{d}t = 0, \quad \mathbb{P}_x\mbox{-a.s.}
		\end{equation}
		By \eqref{M_RT_t_bound}, \eqref{ergodic_M_pt_t} and \eqref{ergodic_M_RT_pt}, we get \eqref{ergodic_M_RT_t}. This completes the proof.
	\end{proof}
	
	\appendix
	\section{Proof of Claim A}\label{appendix}
	\begin{proof}
		The results in \cite{ABK11} already imply this claim, and it can be directly derived with only minor modifications.
		According to equations (5.5), (5.54), (5.62) and (5.63) in \cite{ABK11}, \eqref{local_estimate} holds for any compact interval $\mathcal{D}$. However, we require \eqref{local_estimate} to remain valid when the compact interval $\D$ is replaced by $[D,\infty)$. With a slight modification, the proof given in \cite{ABK11} continues to apply.
		
		For any $x\in \R$, $t>r>0$ and interval $\mathcal{A}$, define
		\begin{equation}\label{def_PxD}
			\mathrm{P}\left(x,t,r,\mathcal{A}\right) = \mathbb{P}_x \left[ \exists u\in N_t: X_u(t) - m_t \in \mathcal{A} \mbox{ but $u$ \textit{not localized} during } (r,t-r) \right].
		\end{equation}
		Let $\mathcal{D}_b = [b,b+1]$ and $\theta\in(0,\alpha)$. We write $\lfloor s \rfloor$ for the largest integer less than or equal to $s$. 
		The key proof strategy of this claim is to decompose $[D,\infty)$ as
		\begin{equation}
			[D,\infty) \subset \left(\bigcup_{b=\lfloor D \rfloor}^{\lfloor r^{\theta} \rfloor} \mathcal{D}_b \right) \cup [r^{\theta},\infty),
		\end{equation}
		and then apply the result from \cite{ABK11} to $\mathrm{P}\left(x,t,r,\D_b\right)$.
		First, we will show that there exist $r_1,\delta_1>0$ depending on $\alpha,\beta$ and $D$ such that for $r\geq r_1$, $b\in [\lfloor D \rfloor,r^{\theta}]$, the following holds:
		\begin{equation}\label{local_estimate_b}
			\sup_{t\geq 3r} \mathrm{P}\left(x,t,r,\D_b\right)\leq \exp\{-r^{\delta_1} \}.
		\end{equation}
		To prove \eqref{local_estimate_b}, we repeat the proof of \cite[Theorems 2.3 and 2.5]{ABK11} on pages 1668-1674 when the compact set $\mathcal{D}$ is replaced by $\D_b = [b,b+1]$. 
		
		When $\D_b = [b,b+1]$, we have $\overline{D} = b+1$ and $\underline{D} = b$ in \cite[(5.20)]{ABK11}. Notice that for $b\geq D$, \cite[(5.25)]{ABK11} implies that
		\begin{equation}
			e^t \Pi[B_t\in m_t+\D_b] \leq \kappa t e^{-\sqrt{2}D}.
		\end{equation}
		The upper bound of this probability does not depend on $b$. In \cite[line 5 on page 1670]{ABK11}, we need to choose $r$ large enough so that $\underline{F}\leq 0$ on $[r,t-r]$ where $\underline{F}$ is given by \cite[(5.32)]{ABK11}. For any $\delta,t>0$, define $f_{t,\delta}(s) = \min\{s^{\delta},(t-s)^{\delta}\}$ for $0\leq s \leq t$. For $b<r^{\theta}<r^{\alpha}$ and any $s\in [r,t-r]$,
		\begin{equation}
			\underline{F}(s) = b\frac{t-s}{t} - f_{t,\alpha}(s) \leq b - r^{\alpha} < 0,
		\end{equation}
		which satisfies the condition. The remaining proof of \cite[Theorem 2.3]{ABK11} remains unchanged.
		
		In the proof of \cite[Theorem 2.5]{ABK11}, for sufficiently large $r$, we have
		\begin{equation}
			\mathrm{diam}(\mathcal{D}) = |\overline{D}| + |\underline{D}| \leq 2b+1 \leq 3r^{\theta}.
		\end{equation}
		Notice that $0<\theta<\alpha<\frac{1}{2}<\beta<1$ and let $0<a<1$ such that $2a\beta-1>0$. We can find $\widetilde{r} = \widetilde{r}(\alpha,\beta,\theta,D,a)$ such that for $r\geq \widetilde{r}$ the following holds:
		\begin{equation}
			3r^{\theta} - f_{t,\alpha}(s) \leq 0 \mbox{ and }  3r^{\theta} - f_{t,\beta}(s) \leq -f_{t,a\beta}(s) \mbox{ for all } r\leq s\leq t-r.
		\end{equation}
		Therefore, the proof of \cite[Theorem 2.5]{ABK11} is also valid and \eqref{local_estimate_b} holds for $b\in [D,r^{\theta}]$.
		
		By \cite[Corollary 10]{ABK12}, for sufficiently large $r$ and $t\geq 3r$, we have
		\begin{align}\label{local_estimate_rtheta}
			\mathrm{P}\left(x,t,r,[r^{\theta},\infty)\right)
			\leq  \mathbb{P}_x \left[ M_t \geq m_t + r^{\theta} \right]
			\leq C r^{\theta} e^{-\sqrt{2}r^{\theta}+1},
		\end{align}
		for some constant $C>0$. Therefore, combining \eqref{local_estimate_b} with \eqref{local_estimate_rtheta}, we get that
		\begin{align}
			\sup_{t\geq 3r} \mathrm{P}\left(x,t,r,[D,\infty)\right) 
			\leq &\, \sum_{b = \lfloor D \rfloor}^{\lfloor r^{\theta} \rfloor }  \mathrm{P}(x,t,r,\mathcal{D}_b) + \mathrm{P}\left(x,t,r,[r^{\theta},\infty)\right)\\
			\leq &\, (|D|+r^{\theta}) e^{-r^{\delta_1}} + C r^{\theta} e^{-\sqrt{2}r^{\theta}+1}.
		\end{align}
		Choose $\delta < \min\{\delta_1,\theta\}$, there exists a sufficiently large $r_0$ such that \eqref{local_estimate2} holds for $r\geq r_0$. This completes the proof.
	\end{proof}
	
	\vspace{.1in}
	\textbf{Acknowledgment}:
	We thank Professor Xinxin Chen for many constructive suggestions, and, in particular, for providing the proof strategy of Claim A.


\begin{thebibliography}{99}
		\bibitem{ABK11} Arguin, L.-P., Bovier, A. and Kistler, N.: The genealogy of extremal particles of branching Brownian motion. \emph{Comm. Pure Appl. Math.} {\bf 64}, (2011), 1647--1676.
		
		\bibitem{ABK12} Arguin, L.-P., Bovier, A. and Kistler, N.: Poissonian statistics in the extremal process of branching
		Brownian motion. \emph{Ann. Appl. Probab.} {\bf 22}, (2012), 1693--1711.
		
		\bibitem{ABK13b} Arguin, L.-P., Bovier, A. and Kistler, N.: An ergodic theorem for the frontier of branching Brownian motion. \emph{Electron. J. Probab.} {\bf 18}, (2013),  no. 53, 25 pp.
		
		\bibitem{Berestycki11} Berestycki, J., Berestycki, N. and Schweinsberg, J.: 
		Survival of near-critical branching Brownian motion. \emph{J. Stat. Phys.} \textbf{143}, (2011), 833--854.
		
		\bibitem{Berestycki14} Berestycki, J., Berestycki, N. and Schweinsberg, J.: 
		Critical branching Brownian motion with absorption: survival probability. \emph{Probab. Theory Related Fields} \textbf{160}, (2014), 489--520.
		
		\bibitem{Bramson78} Bramson, M.: Maximal displacement of branching Brownian motion. \emph{Common. Pure Appl. Math.} \textbf{31}, (1978), 531--581. 
		
		\bibitem{Bramson83} Bramson, M.: Convergence of solutions to the Kolmogorov equation to travelling waves. \emph{Mem. Amer. Math. Soc.} \textbf{44}, (1983), iv+190 pp. 
		
		\bibitem{Fisher37} Fisher, R. A.: The wave of advance of advantageous genes. \emph{Ann. Eugenics} \textbf{7}, (1937),
		355--369.
		
		\bibitem{Harris99} Harris, S. C.: Travelling-waves for the FKPP equation via probabilistic arguments. \emph{Proc. Roy. Soc. Edinburgh Sect. A} \textbf{129}, (1999), 503--517. 
		
		\bibitem{Harris06} Harris, J. W., Harris, S. C. and Kyprianou, A. E.: Further probabilistic analysis of
		the Fisher-Kolmogorov-Petrovskii-Piscounov equation:
		one sided travelling-waves. \emph{Ann. Inst. Henri Poincar\'{e} Probab. Stat.}	{\bf 42}, (2006), 125--145.
		
		\bibitem{Harris07} Harris, J. W. and Harris, S. C.:
		Survival probabilities for branching Brownian motion with absorption. \emph{Electron. Comm. Probab.} \textbf{12}, (2007), 81--92.
		
		\bibitem{Kesten78} Kesten, H.: Branching Brownian motion with absorption. \emph{Stochastic Process. Appl.} {\bf 7}, (1978), 9--47.
		
		\bibitem{Kolmogorov37} Kolmogorov, A., Petrovskii, I. and Piskounov, N.: \'{E}tude de I'\'{e}quation de la diffusion avec croissance de la quantit\'{e} de la mati\`{e}re at son application a un probl\`{e}m biologique. \emph{Moscow Univ. Math. Bull.} \textbf{1}, (1937), 1--25.	
		
		\bibitem{Kyprianou04} Kyprianou, A. E.: Travelling wave solution to the K-P-P equation: Alternatives to Simon Harris' probabilistic analysis. \emph{Ann. Inst. Henri Poincar\'{e} Probab. Stat.}
		\textbf{40}, (2004), 53--72. 
		
		\bibitem{Lalley87} Lalley, S. and Sellke, T.: A conditional limit theorem for the frontier of a branching Brownian motion. \emph{Ann. Probab.} {\bf 15}, (1987), 1052--1061.
		
		\bibitem{Liu21} Liu, J.:
		A Yaglom type asymptotic result for subcritical branching Brownian motion with absorption. \emph{Stochastic Process. Appl.} \textbf{141}, (2021), 245--273.
		
		\bibitem{Louidor20} Louidor, O. and Saglietti, S.: A strong law of large numbers for super-critical branching Brownian motion with absorption. \emph{J. Stat. Phys.} \textbf{181}, (2020), 1112--1137.
		
		\bibitem{Maillard22} Maillard, P. and Schweinsberg, J.:
		Yaglom-type limit theorems for branching Brownian motion with absorption. \emph{Ann. H. Lebesgue} \textbf{5}, (2022), 921--985.
		
		\bibitem{McKean75} McKean, H. P.: Application of Brownian motion to the equation of Kolmogorov-Petrovskii-Piskunov. \emph{Comm. Pure Appl. Math.} \textbf{28}, (1975), 323--331. 
		
		\bibitem{YZ23} Yang, F. and Zhu, Y.: The extremal process of branching Brownian motion with absorption. \emph{Electron. J. Probab.} \textbf{29}, (2024), 1-21.
		
	\end{thebibliography}
\end{document}